\def\tef{transcendental entire function}
\def\qfor{\quad\text{for }}
\def\hmin{h_{\operatorname{min}}}
\def\endpoints{E}
\def\No{\mathbb{N}_0}
\def\addr{\operatorname{addr}}
\def\blfootnote{\xdef\@thefnmark{}\@footnotetext}
\newtheorem{thmx}{Theorem}
\theoremstyle{plain}
\newtheorem{theorem}{Theorem}[section]
\newtheorem{proposition}[theorem]{Proposition}
\newtheorem{lemma}[theorem]{Lemma}
\newtheorem{observation}[theorem]{Observation}
\newtheorem{definition}[theorem]{Definition}
\newtheorem*{remarks}{Remarks}
\newcommand{\C}{{\mathbb{C}}}
\newcommand{\B}{\mathcal B}
\newcommand{\R}{{\mathbb{R}}}
\newcommand{\Z}{{\mathbb{Z}}}
\newcommand{\N}{{\mathbb{N}}}
\newcommand{\s}{\underline{s}}
\newcommand{\Chat}{\hat{\mathbb{C}}}
\DeclareMathOperator*{\esssup}{ess\,sup}
\newcommand*{\defeq}{\mathrel{\vcenter{\baselineskip0.5ex \lineskiplimit0pt
                     \hbox{\scriptsize.}\hbox{\scriptsize.}}}%
                     =}
\begin{document}
\title[Dynamics of generalised exponential maps]{Dynamics of generalised exponential maps}
\author{Patrick Comd\"{u}hr, \, Vasiliki Evdoridou, \, David J. Sixsmith}

\address{Mathematisches Seminar \\ Christian-Albrechts-Universit\"{a}t zu Kiel \\ Ludewig-Meyn-Str. 4 \\D--24098 Kiel \\ Germany\textsc{\newline \indent \href{https://orcid.org/0000-0003-1811-7840}{\includegraphics[width=1em,height=1em]{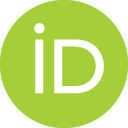} {\normalfont https://orcid.org/0000-0003-1811-7840}}}}
\email{comduehr@math.uni-kiel.de}

\address{School of Mathematics and Statistics\\ The Open University\\
Milton Keynes MK7 6AA\\ UK\textsc{\newline \indent \href{https://orcid.org/0000-0002-5409-2663}{\includegraphics[width=1em,height=1em]{orcid2.png} {\normalfont https://orcid.org/0000-0002-5409-2663}}}}
\email{vasiliki.evdoridou@open.ac.uk}

\address{Dept. of Mathematical Sciences \\ University of Liverpool \\ Liverpool L69 7ZL \\ UK \textsc{\newline \indent \href{https://orcid.org/0000-0002-3543-6969}{\includegraphics[width=1em,height=1em]{orcid2.png} {\normalfont https://orcid.org/0000-0002-3543-6969}}}}
\email{djs@liverpool.ac.uk}

\thanks{The second author was supported by Engineering and Physical Sciences Research Council grant EP/R010560/1. \vspace{3pt}\\ 2010 Mathematics Subject Classification. Primary 37F10; Secondary 30D05.\vspace{3pt}\\ Key words: complex dynamics, exponential functions.\vspace{3pt}\\ }

\begin{abstract}
Since 1984, many authors have studied the dynamics of maps of the form $\mathcal{E}_a(z) = e^z - a$, with $a > 1$. It is now well-known that the Julia set of such a map has an intricate topological structure known as a \emph{Cantor bouquet}, and much is known about the dynamical properties of these functions.

In recent papers some of these ideas have been generalised to a class of quasiregular maps in $\mathbb{R}^3$, which, in a precise sense, is analogous to the class of maps of the form $\mathcal{E}_a$. Our goal in this paper is to make similar generalisations in $\mathbb{R}^2$. In particular, we show that there is a large class of continuous maps, which, in general, are not even quasiregular, but are closely analogous to the map $\mathcal{E}_a$, and have very similar dynamical properties. In some sense this shows that many of the interesting dynamical properties of the map $\mathcal{E}_a$ arise from its elementary function theoretic structure, rather than as a result of analyticity.
\end{abstract}
\maketitle
\section{Introduction}
Let $f : \R^n \to \R^n$ be a function, and let $f^n$ denote the $n$th iterate of $f$. In this paper we are interested in the iteration of a continuous function $f:\R^2 \to \R^2$, which need not be analytic, and throughout we identify $\R^2$ with the complex plane $\C$ in the obvious way. A special case of such a function is when $f:\C \to \C$ is transcendental entire. Then we define the \textit{Julia set} $J(f)$ as the set of points $z \in \C$ where the iterates $\{f^n\}_{n \in \mathbb{N}}$ fail to form a normal family in any neighbourhood of $z$; roughly speaking, the iterates of $f$ are chaotic near a point in the Julia set. For an introduction to the properties of the Julia set, and the dynamics of {\tef}s, see, for example, \cite{bergweiler93} and \cite{Dierk}. 

In the study of the dynamics of {\tef}s, many authors have considered maps of the form 
\begin{equation}
\label{Eadef}
\mathcal{E}_a(z) \defeq e^z - a, \qfor a > 1.
\end{equation}
It is straightforward to show that $\mathcal{E}_a$ has an attracting fixed point $\xi \in \R$. We denote by $F$ the set of points that are attracted to $\xi$; in other words
\[
F \defeq \{ z \in \C : \mathcal{E}_a^n(z) \to \xi \text{ as } n \rightarrow\infty \}.
\]
It can be shown that $J(\mathcal{E}_a) = \C \setminus F$ . (Clearly $F$ here is the \emph{Fatou set} of $\mathcal{E}_a$, but we do not use this fact.)

The first study of the dynamics of maps of the form \eqref{Eadef} was by Devaney and Krych \cite{DevaneyandKrych}. Many authors since then have investigated these maps, and in the following we summarise some of the most important results that are known concerning their dynamical properties. Before stating the result we need a number of definitions.

We say that a component $\gamma$ of $J(\mathcal{E}_a)$ is a \emph{Devaney hair} if it is a simple curve $\gamma : [0, \infty) \to \C$ with the properties that:
\begin{enumerate}[(I)]
\item $\gamma(t) \rightarrow \infty$ as $t \rightarrow \infty$.\label{it:dev1}
\item For each $n \geq 0$, $\mathcal{E}_a^n(\gamma)$ is a simple curve that connects $\mathcal{E}_a^n(\gamma(0))$ to $\infty$. We call $\gamma(0)$ the \emph{endpoint} of the curve $\gamma$.\label{it:dev3}
\item For each $t>0$, $\mathcal{E}_a^n \rightarrow\infty$ as $n\rightarrow\infty$ uniformly on $\gamma([t,\infty))$.\label{it:dev2}
\end{enumerate}
Note that there are other definitions of a Devaney hair in the literature; we have used the definition first used in \cite{DevHairs}, although, unlike in that paper, we do not formally specify that the hairs lie in the Julia set. 

A subset of $\C$ is a \emph{Cantor bouquet} if it is ambiently homeomorphic to a topological object known as a \emph{straight brush}; see \cite{AartsandO} for a precise definition. We say that $X \subset \Chat$ is \emph{totally separated} if for all $a, b \in X$, with $a\neq b$, there exists a relatively open and closed set $U \subset X$ such that $a \in U$ and $b \notin U$.

We are now able to state the results.
\begin{thmx}
\label{theo:exp}
Let $\mathcal{E}_a$ be the {\tef} defined in \eqref{Eadef}. Then the following all hold.
\begin{enumerate}[(a)]
\item $J(\mathcal{E}_a)$ has uncountably many components, each of which is a Devaney hair.\label{hairs}
\item $J(\mathcal{E}_a)$ is a Cantor bouquet.\label{bouquet}
\item Write $\endpoints$ for the set of endpoints of the Devaney hairs in $J(\mathcal{E}_a)$. Then $\endpoints$ is totally separated, but $\endpoints \cup \{\infty\}$ is connected.\label{separated}
\end{enumerate}
\end{thmx}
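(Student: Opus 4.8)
The plan is to build Theorem~\ref{theo:exp} on the symbolic dynamics of $\mathcal{E}_a$. Fix the partition of $\C$ into the closed horizontal strips $R_k \defeq \{ z : (2k-1)\pi \le \Im z \le (2k+1)\pi \}$, $k \in \Z$, and note that $\mathcal{E}_a$ carries the interior of each $R_k$ conformally onto $\C \setminus (-\infty,-a]$. To a point $z$ whose forward orbit avoids $\bigcup_k \partial R_k$ one assigns the \emph{address} $\s(z) = s_0 s_1 s_2 \cdots \in \Z^{\No}$ determined by $\mathcal{E}_a^n(z) \in R_{s_n}$. The construction of Devaney and Krych \cite{DevaneyandKrych} then produces, for every address $\s$ obeying a mild growth condition — in particular for every bounded $\s$ — a unique injective curve $g_\s : [\tau_\s, \infty) \to \C$ with $\mathcal{E}_a \circ g_\s = g_{\sigma \s}$, where $\sigma$ is the shift, realised as the locally uniform limit of the pullbacks along $\s$ of a horizontal ray far out in the right half-plane. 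One checks from this description that $g_\s$ satisfies the defining conditions \ref{it:dev1}--\ref{it:dev2} of a Devaney hair, so $g_\s$ lies in a single component of $J(\mathcal{E}_a) = \C \setminus F$; since there are uncountably many admissible addresses and these produce pairwise disjoint hairs, $J(\mathcal{E}_a)$ then has uncountably many components.

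It remains for \ref{hairs} to show that the $g_\s$ exhaust $J(\mathcal{E}_a)$ and that each is a whole component. Here I would invoke the classification of points with unbounded orbit — every such point lies on some $g_\s((\tau_\s,\infty))$ or is an endpoint $g_\s(\tau_\s)$ — and argue that the points of $J(\mathcal{E}_a)$ with bounded orbit lie on hairs too, using that $F$ is open and dense and that the complement of $\overline{F}$ can contain no nondegenerate continuum other than a sub-arc of some $g_\s$; the expansion estimate for $\mathcal{E}_a$ then shows each hair is relatively closed in $J(\mathcal{E}_a)$, so the components of $J(\mathcal{E}_a)$ are exactly the $g_\s$. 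For part \ref{bouquet} the plan is to verify that $J(\mathcal{E}_a)$, now identified with the union of the hairs $g_\s$, is ambiently homeomorphic to a straight brush and then appeal to the definition: one shows the hairs together with their endpoints form a closed set, that $\s \mapsto g_\s$ is continuous in the required sense — the delicate point being that endpoints cannot jump inward under limits of addresses — and that the resulting picture can be straightened by a homeomorphism of $\C$. These are exactly the hypotheses checked by Aarts and Oversteegen \cite{AartsandO}, so in practice \ref{bouquet} is a citation to that work.

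Finally, part \ref{separated}. Total separation of $\endpoints$ is inherited from the straight brush model: the set of heights that support a hair is nowhere dense, so given distinct $p, q \in \endpoints$ one can slide a horizontal line strictly between them that misses the brush entirely, and the two open half-planes it bounds give, via the ambient homeomorphism, a partition of $\endpoints$ into relatively clopen sets separating $p$ from $q$. For the connectedness of $\endpoints \cup \{\infty\}$ I would run Mayer's ``explosion point'' argument: if $\endpoints \cup \{\infty\} = A \cup B$ with $A, B$ disjoint, nonempty, and closed in $\Chat$, and $\infty \in A$, then a density statement for the endpoints of hairs near $\infty$ forces $\infty \in \overline{B}$, a contradiction.

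I expect the main obstacle to be precisely the inputs needed for \ref{bouquet}: controlling how $g_\s$, and especially the endpoint $g_\s(\tau_\s)$, depends on $\s$ — the closedness of the family and the ``no inward jump'' continuity — together with the half of \ref{hairs} that forbids a component of $J(\mathcal{E}_a)$ strictly larger than a single hair. Both rest on quantitative expansion estimates for $\mathcal{E}_a$ near $\infty$ and on a good description of the basin $F$, which is where the substantive work (carried out in the cited papers) sits.
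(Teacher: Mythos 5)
The paper does not give its own proof of Theorem~A: it is stated as background, with the remarks immediately following the theorem attributing part~(a) to \cite{DevaneyandTangerman}, \cite{Karpinska2} and \cite{Lasse}, part~(b) to \cite{AartsandO}, and part~(c) to \cite{Mayer}. Your proposal is essentially a sketch of what those cited papers do, so in that sense you have identified the right literature. The paper's own novel machinery appears in its proof of the generalisation Theorem~\ref{theo:main}, and if you specialise that machinery to $\mathcal{E}_a$ you get a genuinely different proof of Theorem~A from the one you sketch: the tracts $T_k$ are half-plane strips of width $\pi$ rather than your full Devaney--Krych strips $R_k$ of width $2\pi$; the arc structure of $J_{\s}$ is obtained not by pulling back a horizontal ray but via a uniform head-start condition and a speed ordering in the style of \cite{RRRS} (Lemmas~\ref{lem:uniformheadstart}--\ref{lem: arcs in Js}); the Cantor bouquet in~(b) is built by constructing a one-sided hairy arc following \cite{brushing}; and part~(c) is not re-derived from Mayer's argument at all, but is observed to follow automatically from~(b) because (as noted in \cite{LasseNada}) every Cantor bouquet has the explosion-point property. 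That last shortcut is a real simplification over your plan to redo Mayer's contradiction argument, and it is exactly how Section~\ref{sec.bouquet} of the paper disposes of~(c).

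There is also a concrete gap in your sketch of~(a). After building the hairs $g_{\s}$ for bounded addresses, you propose to show they exhaust $J(\mathcal{E}_a)$ ``using that $F$ is open and dense and that the complement of $\overline{F}$ can contain no nondegenerate continuum other than a sub-arc of some $g_{\s}$.'' But $F$ dense means $\overline{F}=\C$, so $\C\setminus\overline{F}$ is empty and this clause is vacuous; it proves nothing about points of $J$. Also, bounded addresses are not enough: the admissible addresses for $\mathcal{E}_a$ are the \emph{exponentially} bounded ones (the $g$-bounded addresses for $g=\exp$, cf.\ Theorem~\ref{thm:gbounded-admissible}), which is a strictly larger class, and you need all of them for exhaustion. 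The clean route, and the one the paper's machinery takes, is: every point of $J$ has an external address; the address is constant on each connected subset of $J$ (a connected set that meets two tracts must meet the gap between them, which lies in $F$); every component of $J$ is unbounded (Proposition~\ref{prop:unbounded}); and for each admissible $\s$, $J_{\s}$ has a unique unbounded component, a simple arc to infinity (Lemma~\ref{lem: arcs in Js}). Together these show each component of $J$ is exactly some $J_{\s}$, with no appeal to $\overline{F}$ or to a ``classification of points with bounded orbit.''
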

\begin{remarks}\normalfont
Part \eqref{hairs} seems to be a combination of results from \cite{DevaneyandTangerman}, \cite{Karpinska2} and \cite{Lasse}. Part \eqref{bouquet} is a result of \cite{AartsandO}; although the term ``Cantor bouquet'' had been used previously, this was the first paper to give a precise topological definition of such an object. 
Part \eqref{separated} is from \cite{Mayer}; this result can also be stated that $\infty$ is an \emph{explosion point} for the set $\endpoints \cup \{\infty\}$. 
Note that many of the authors cited above considered, in fact, the {\tef}s $\tilde{\mathcal{E}}_\lambda(z) \defeq \lambda e^z$, for $\lambda \ne 0$. The functions $\mathcal{E}_a$ and $\tilde{\mathcal{E}}_{e^{-a}}$ have the same dynamics, as they are conjugate via the map $z \mapsto z - a$. 
\end{remarks}

Our goal in this paper is to show that there is a large class of continuous functions $f : \R^2 \to \R^2$ that are analogous to the map $\mathcal{E}_a$ and also have the dynamical properties listed above. This shows that, in some sense, the properties listed in Theorem~A derive from elementary function theoretic properties of $\mathcal{E}_a$ rather than its analyticity. We stress that the functions in our class are continuous but not necessarily quasiregular.

To define our maps, we observe that if $z = x + iy$, then $e^z = e^x(\cos y + i \sin y)$. So we begin by considering a map
\begin{equation*}
Z : \C \to \C; \quad Z(x+ iy) \defeq g(x) h(y),
\end{equation*} 
where $g : \R \to \R$ and $h : \R \to \C$ are continuous functions defined in such a way that $g$ has behaviour analogous to the real exponential function, and $h$ has behaviour analogous to the map $y \mapsto \cos y + i \sin y$.

We first specify the map $h$. Let $\eta$ be a simple curve from $-i$ to $i$ with the following two properties; see Figure~\ref{f1}. Firstly, we have that 
\[
\eta\setminus\{-i, i\} \subset \{ z \in \C : \operatorname{Re} z >0 \text{ and } |z| \leq 1 \}.
\]
Secondly, for each $\theta \in [-\pi/2, \pi/2]$, the curve $\eta$ contains exactly one point of argument $\theta$. We then let $h : [-\pi/2, \pi/2] \to \eta$ be a biLipschitz map, such that $h(\pm \pi/2) =  \pm i$. Finally we extend $h$ to a map from $\R$ to $\eta \cup -\eta$ as follows; if $y \in \R$, and $y' \in [-\pi/2, \pi/2]$ is such that $y' = y + p\pi$, for some $p \in \Z$, then 
\[
h(y) \defeq 
\begin{cases}
h(y'),  &\text{ for } p \text{ even}, \\
-h(y'), &\text{ for } p \text{ odd}.
\end{cases}
\]

\begin{figure}
	\includegraphics[width=12cm,height=8cm]{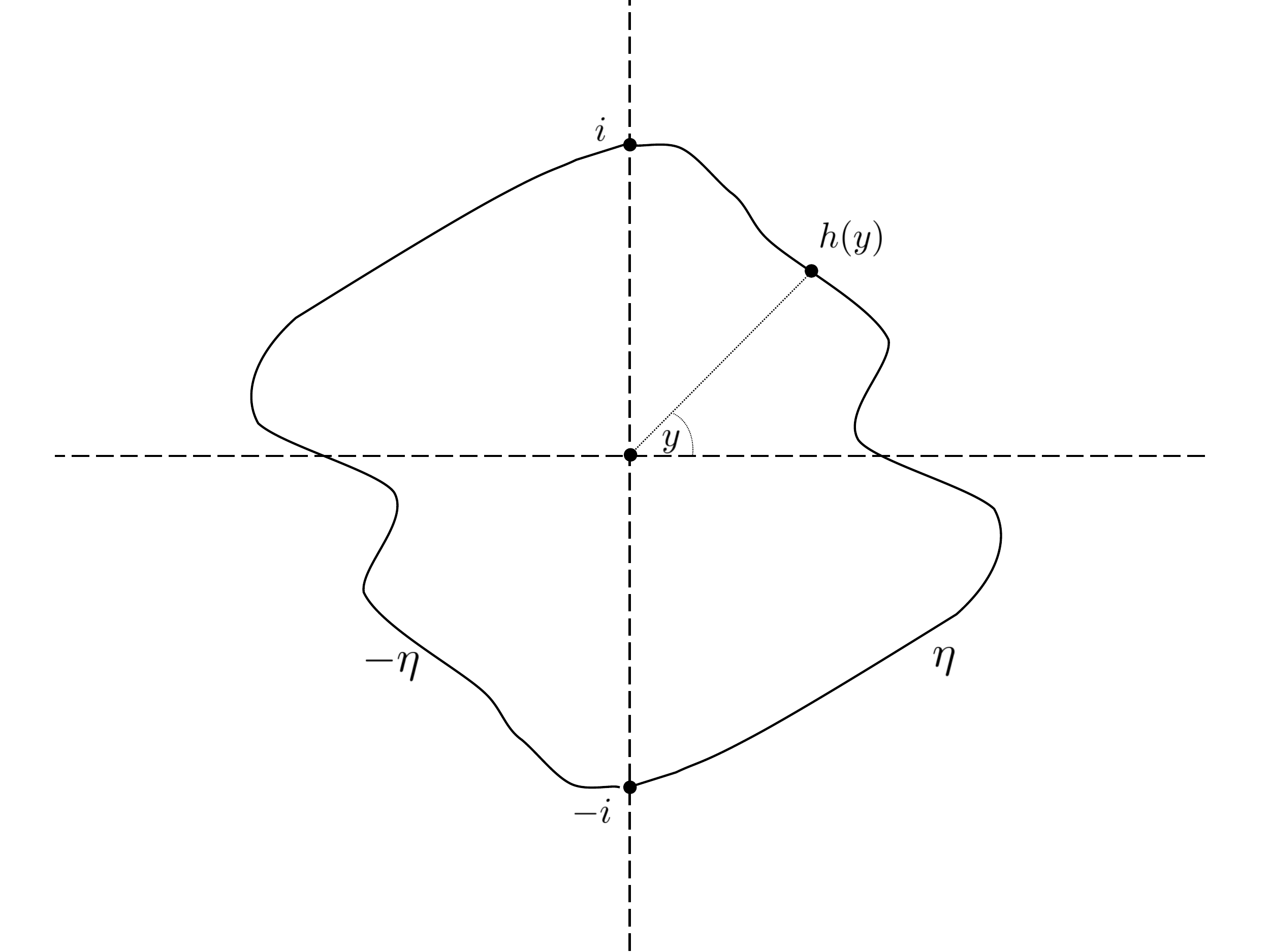}
  \caption{An illustration of a definition of the function $h$.}\label{f1}
\end{figure}

It is also useful to define $h(y) \defeq h_1(y) + i h_2(y)$, for real valued functions $h_1$ and $h_2$. Note also that it follows from the above that there exists $\hmin \in (0,1)$ such that
\begin{equation}
\label{hdef}
\hmin \leq |h(y)| \leq 1, \qfor y \in \R.
\end{equation}

Next we specify the function $g$. We let $g : \R \to (0, \infty)$ be such that the following conditions both hold.
\begin{enumerate}[(A)]
\item The function $g$ is strictly increasing, and $g(x) \rightarrow 0$ as $x \rightarrow -\infty$.\label{fun.A}
\item The derivative $g'(x)$ is defined almost everywhere, and is monotonically non-decreasing where it is defined.\label{fun.B}
\end{enumerate}
Note that it follows that, where defined, $g'(x)$ tends to $0$ as $x$ tends to $-\infty$. Note also that the positive, real, convex functions $G$ with $G(x) \to 0$ as $x\to -\infty$ is a large class of functions which satisfy properties \eqref{fun.A} and \eqref{fun.B}.

We need $g$ to grow sufficiently quickly. In particular, we suppose that there exists $c > 1/\hmin > 1$ such that,
\begin{equation}
\label{gconstraint}
g(x + 2\pi) \geq c g(x), \qfor x \text{ sufficiently large}.
\end{equation}

We then define the map $Z$ mentioned earlier by
\begin{equation}
\label{eq:Zdef}
Z(x+iy) \defeq g(x) h(y) = g(x) h_1(y) + i g(x) h_2(y).
\end{equation}
Note that when $g(x) = e^x$, the function $Z$ in \eqref{eq:Zdef} is quasiregular, and is known as a \emph{Zorich map}. If, in addition, $h(y) = \cos y + i \sin y$, then we have $Z \equiv \exp$.

Finally we define the function we are going to iterate. We let $a > 0$ and set
\begin{equation}
\label{eq:fdef}
f(z) \defeq Z(z) - a.
\end{equation}
We will later ensure that $a$ is sufficiently large for various conditions to hold. We then make the following definition. 
\begin{definition}
Suppose that $f$ is as defined in \eqref{eq:fdef}, where $Z$ is as defined in \eqref{eq:Zdef} for functions $g, h$ that satisfy all the conditions listed earlier. Then we say that $f$ is a \emph{generalised exponential}.
\end{definition}
Although not necessarily quasiregular, $f$ is continuous, open, discrete, and differentiable almost everywhere. Note also that it follows from these definitions that any local inverse of $f$ is also continuous and differentiable almost everywhere. The name for the generalised exponentials can be further justified by equation \eqref{eq:ggrowth},  given in Section~\ref{sec.basic}, which shows, in particular, that no polynomial can satisfy condition \eqref{gconstraint}.

For a generalised exponential, $f$, there is no obvious definition of a Julia set; although the Julia set can be defined for quasiregular maps \cite{WalterDan}, we do not want to assume that $f$ is even quasiregular. The following result allows us, nonetheless, to establish a set analogous to the Julia set. Here we define 
\[
\mathbb{H}_r \defeq \{ x + iy : x \leq r \}, \qfor r \in \R.
\]
\begin{theorem}
\label{thereisaFatouset}
Suppose that $f$ is a generalised exponential. Then, there exist $m<0$ and $M>0$ such that whenever $a$ is sufficiently large, $f$ has a unique attracting fixed point $\xi \in \mathbb{H}_m$, $f(\mathbb{H}_M) \subset \mathbb{H}_m$, and all points of $\mathbb{H}_M$ tend to $\xi$ under iteration.
\end{theorem}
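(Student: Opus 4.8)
The plan is to fix $m$ and $M$ essentially arbitrarily, subject only to $m<0<M$ (so that $\mathbb{H}_m\subset\mathbb{H}_M$); concretely, one may take $m=-1$ and $M=1$. The idea is that, for $a$ large, $f$ maps $\mathbb{H}_M$ into a much thinner half-plane on which $f$ is a genuine contraction. Banach's fixed point theorem then delivers the fixed point, its uniqueness, and the convergence statement in one go, and the location $\xi\in\mathbb{H}_m$ together with uniqueness there will drop out of the nesting of the half-planes involved.

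First I would record two elementary inclusions. Let $z=x+iy\in\mathbb{H}_M$. Using \eqref{eq:Zdef}, the positivity and monotonicity of $g$, and $|h|\le1$ from \eqref{hdef}, we get
\[
\re f(z)=g(x)h_1(y)-a\le g(x)\,|h(y)|-a\le g(M)-a .
\]
Writing $r\defeq g(M)-a$, this says $f(\mathbb{H}_M)\subset\mathbb{H}_r$. Since $r\to-\infty$ as $a\to\infty$, once $a$ is large enough we have $r\le m$ and $r\le M$, whence
\[
f(\mathbb{H}_M)\subset\mathbb{H}_r\subset\mathbb{H}_m\subset\mathbb{H}_M .
\]
In particular $\mathbb{H}_r$ is a closed (hence complete) subset of $\C$ that is forward invariant under $f$.

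The main step is to show that, after increasing $a$ further, $f|_{\mathbb{H}_r}$ is a contraction. Rather than differentiate, I would exploit the product structure of $Z$: for $z_j=x_j+iy_j$ with $j=1,2$,
\[
Z(z_1)-Z(z_2)=g(x_1)\bigl(h(y_1)-h(y_2)\bigr)+\bigl(g(x_1)-g(x_2)\bigr)h(y_2).
\]
Since $h$ is biLipschitz, say with Lipschitz constant $L$, and $|h|\le1$, while for $x_1,x_2\le r$ we have $g(x_1)\le g(r)$ and a bound of the form $|g(x_1)-g(x_2)|\le g'(r)\,|x_1-x_2|$ (this is where properties \eqref{fun.A}--\eqref{fun.B} enter, and is immediate when $g$ is convex), we obtain
\[
|f(z_1)-f(z_2)|\le g(r)L\,|y_1-y_2|+g'(r)\,|x_1-x_2|\le \sqrt2\,\max\{g(r)L,\ g'(r)\}\,|z_1-z_2|
\]
for $z_1,z_2\in\mathbb{H}_r$. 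Since $g(r)\to0$ and $g'(r)\to0$ as $r\to-\infty$, and $r=g(M)-a\to-\infty$ as $a\to\infty$, the constant on the right is some $\lambda<1$ once $a$ is large enough, so $f\colon\mathbb{H}_r\to\mathbb{H}_r$ is a $\lambda$-contraction of a complete metric space.

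Finally I would assemble the conclusion. By Banach's fixed point theorem $f$ has a unique fixed point $\xi$ in $\mathbb{H}_r$, and $f^n(w)\to\xi$ for every $w\in\mathbb{H}_r$; since $f(\mathbb{H}_M)\subset\mathbb{H}_r$, also $f^n(w)\to\xi$ for every $w\in\mathbb{H}_M$. As $\mathbb{H}_r\subset\mathbb{H}_m$ we have $\xi\in\mathbb{H}_m$, and any fixed point of $f$ lying in $\mathbb{H}_m\subset\mathbb{H}_M$ automatically lies in $f(\mathbb{H}_M)\subset\mathbb{H}_r$, where the fixed point is unique, so $\xi$ is the unique fixed point of $f$ in $\mathbb{H}_m$. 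It is attracting: the estimate $|f(w)-\xi|\le\lambda|w-\xi|$ holds on $\mathbb{H}_r$ with $\lambda<1$, and at any point of differentiability in $\mathbb{H}_r$ the Jacobian has norm at most $\lambda$. The only genuinely substantive point is the contraction estimate of the previous paragraph — everything else is bookkeeping with the nested half-planes — and within it the one delicate ingredient is controlling the increments of $g$ near $-\infty$ from hypotheses \eqref{fun.A}--\eqref{fun.B} alone.
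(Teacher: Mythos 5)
Your proposal takes a genuinely different route from the paper's, but it also leaves a genuine gap. The difference in route: the paper first fixes $m$ so that $\mathbb{H}_m$ is \emph{already} a contraction region for $f$ (by choosing $m$ so small that $g(x)+g'(x)\le \tfrac{1}{2(1+L)}$ a.e.\ for $x\le m$, giving $\|Df\|\le\tfrac12$ there), and only then picks $a$ so that $f(\mathbb{H}_M)\subset\mathbb{H}_m$; you instead fix $m,M$ essentially arbitrarily, observe $f(\mathbb{H}_M)\subset\mathbb{H}_r$ with $r=g(M)-a$, and let $a\to\infty$ push $r$ to $-\infty$, establishing the contraction on the $a$-dependent half-plane $\mathbb{H}_r$ via the algebraic splitting $Z(z_1)-Z(z_2)=g(x_1)(h(y_1)-h(y_2))+(g(x_1)-g(x_2))h(y_2)$ rather than via a Jacobian bound. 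Both hinge on the same ingredients ($|h|\le 1$, $h$ biLipschitz, decay of $g$ and $g'$ at $-\infty$), and both gloss over the same absolute-continuity technicality in bounding increments of $g$ by $g'$, so these parts are comparable.

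The gap is in the uniqueness claim. The theorem asserts that $\xi$ is \emph{the} unique attracting fixed point of $f$, and the paper's proof handles the rest of the plane explicitly: it takes $M$ to be the constant from Lemma~\ref{lemm:deriv}, so that $\ell(DZ)\ge\mu>1$ a.e.\ on $\{\operatorname{Re} z\ge M\}$, and then observes that this expansion rules out any attracting fixed point outside $\mathbb{H}_M$. Your argument is confined entirely to $\mathbb{H}_M$; with your choice $M=1$ (or any arbitrary $M>0$) you have no control at all on $\{\operatorname{Re} z>M\}$, and so you have not excluded another attracting fixed point there. (Note a fixed point with $\operatorname{Re} z>M$ is not automatically absurd: such a $z$ would simply satisfy $\operatorname{Re} f(z)>M$, which your inclusions do not forbid.) To close this you would need to take $M$ from Lemma~\ref{lemm:deriv} and invoke the expansion estimate — at which point you are back to the paper's choice of $M$, and the freedom to take $M=1$ evaporates. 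Once that is added, the remainder of your argument is correct, and your treatment of the nested half-planes $\mathbb{H}_r\subset\mathbb{H}_m\subset\mathbb{H}_M$ is a clean way to organise the rest.
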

We can now use Theorem~\ref{thereisaFatouset} to make the following natural definition.
\begin{definition}
\label{def:JandF}
If the conditions of Theorem~\ref{thereisaFatouset} hold, then we let $F$ denote the set of points that iterate to the unique attracting fixed point, and set $J \defeq \C \setminus F$.
\end{definition}

Our main result is then an extension of Theorem~A to the class of generalised exponentials.
\begin{theorem}
\label{theo:main}
Suppose that $f$ is a generalised exponential. Then, for all sufficiently large values of $a$, the following all hold.
\begin{enumerate}[(a)]
\item $J$ has uncountably many components, each of which is a Devaney hair.\label{Jhairs}
\item $J$ is a Cantor bouquet.\label{Jbouquet}
\item If $\endpoints$ is the set of endpoints of the Devaney hairs in $J$, then $\endpoints$ is totally separated, but $\endpoints \cup \{\infty\}$ is connected.\label{Jseparated}
\end{enumerate}
\end{theorem}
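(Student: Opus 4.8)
The plan is to follow closely the classical strategy for $\mathcal{E}_a$, organised around symbolic dynamics. First I would set up the combinatorial framework. Because $h$ wraps $\R$ around the curve $\eta \cup -\eta$ with period $2\pi$, the map $Z$ (and hence $f$) sends each horizontal strip $R_k \defeq \{x+iy : (2k-1/2)\pi < y < (2k+1/2)\pi\}$, for $k \in \Z$, onto a set analogous to a slit half-plane. Using condition \eqref{gconstraint} (which forces $g$, and hence the image of each strip, to grow geometrically in the $2\pi$-direction) together with the bounds \eqref{hdef} on $|h|$, one checks that for $a$ large, $f(R_k) \supset \bigcup_j \overline{R_j}$ up to the slit, so there is a well-defined \emph{address} $\addr(z) = (s_0, s_1, \dots) \in \Z^{\No}$ recording which strip $f^n(z)$ lies in. The points whose orbit stays in $\mathbb{H}_M^c$ (equivalently, whose address is ``allowable'' in a suitable slow-growth sense, cf.\ the standard condition $|s_n| \leq C\cdot c^n$ or similar) are exactly the points of $J$; everything else lands in $\mathbb{H}_M$ and is swept into $\xi$ by Theorem \ref{thereisaFatouset}.

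Next I would construct the hairs. For each allowable address $\s$, I would build the hair $\gamma_{\s}$ as a nested intersection: let $\gamma_{\s}^n$ be the set of $z$ with $\addr(z) = \s$ and $f^j(z)$ of real part at least some large $R_j$ growing appropriately for $j < n$; these are decreasing compact-ish ``strip pieces'' whose intersection, after a standard expansion argument, is a single curve parametrised by real part (or by ``fast escaping rate''). The key analytic input is a bounded-distortion / hyperbolic-expansion estimate for the local inverses of $f$: because $g'$ is monotone (condition \eqref{fun.B}) and $g$ grows geometrically, the branches of $f^{-1}$ restricted to the relevant right half-planes contract by a definite factor, and one gets that $\gamma_{\s}^n$ has diameter (in an appropriate metric, e.g.\ the hyperbolic metric of $\re z > $ const, or simply after normalising by $g$) shrinking geometrically in $n$. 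This yields properties \eqref{it:dev1}--\eqref{it:dev2} of a Devaney hair: the curve goes to $\infty$, its iterates are curves to $\infty$, and the tail escapes uniformly. Uncountability of the component set is then immediate from the uncountability of the allowable address space, giving part \eqref{Jhairs}.

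For part \eqref{Jbouquet}, I would verify the straight-brush axioms of Aarts--Oversteegen for $J$ directly in address coordinates: the density of hairs (between any two addresses lies a third, using that $\Z^{\No}$ with the order topology is order-dense in the relevant sense), the ``compact sections'' property (hairs with bounded address and bounded endpoint-height form a closed set — this uses continuity of $f$ and the distortion estimates, not analyticity), and the fact that each hair is a closed subset accumulated on from both sides. The ambient homeomorphism to a straight brush then follows from the Aarts--Oversteegen characterisation. For part \eqref{Jseparated}, total separation of $\endpoints$ comes from the observation that the address map $\endpoints \to \Z^{\No}$ is continuous and the height/escaping-rate gives a second continuous real coordinate separating points on the same hair, so $\endpoints$ embeds in a totally disconnected-ish product in a way that separates any two points by a clopen set; meanwhile connectedness of $\endpoints \cup \{\infty\}$ is the Mayer-type explosion-point argument, run verbatim once one knows $J$ is a straight brush (it is a purely topological consequence of the brush structure plus the hairs limiting onto each other at $\infty$).

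The main obstacle I anticipate is the expansion/distortion estimate for the inverse branches of $f$. In the analytic case one uses Koebe distortion and the explicit derivative $e^z$; here $g$ is merely convex-like and differentiable a.e., and $h$ is merely biLipschitz, so the map is not conformal and standard conformal tools are unavailable. The work will be in extracting, from condition \eqref{gconstraint} (geometric growth of $g$) and the monotonicity of $g'$, a replacement: a bound showing that on the relevant right half-planes $f$ expands distances by at least a factor tending to $\infty$ with $\re z$, \emph{uniformly} in the imaginary direction across a strip, with controlled ratio-distortion so that the nested strip pieces really do shrink to arcs. This is presumably where the hypothesis $c > 1/\hmin$ is used — it guarantees that the geometric growth of $g$ beats the worst-case shrinkage $\hmin$ coming from $h$, so that images of strips genuinely cover the target and inverse branches are genuine contractions. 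Everything downstream (hairs, brush axioms, explosion point) is then a faithful transcription of the classical arguments, which were themselves essentially combinatorial-topological and never really used analyticity beyond such a distortion bound.
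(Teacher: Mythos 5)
Your overall plan matches the paper's: symbolic dynamics on horizontal strips, an expansion estimate, construction of the hairs, then a brush/bouquet structure and the Mayer explosion point. The combinatorial setup is the same (your strips $R_k$ are the paper's tracts $T_k$, and your ``slow-growth'' condition on addresses is the paper's notion of $g$-bounded addresses, Theorem~\ref{thm:gbounded-admissible}). However, the technical heart differs in a way that matters.

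You flag the distortion estimate for inverse branches as the main obstacle, and propose to control ``ratio-distortion so that the nested strip pieces really do shrink to arcs.'' In this non-conformal setting there is no Koebe-type distortion theorem available, and in fact the paper never proves or uses one. Instead it establishes only the crude uniform expansion $\ell(DZ(z)) \geq \mu > 1$ on $\Re z \geq M$ (Lemma~\ref{lemm:deriv}, Proposition~\ref{prop:expansion}) and then uses this to verify a \emph{uniform head-start condition} in the sense of \cite{RRRS} (Lemma~\ref{lem:uniformheadstart}). From the head-start condition one gets a total ``speed ordering'' $\succ$ on each fibre $J_{\underline{s}}$, and the fact that $\hat{J_{\underline{s}}}$ is an arc then follows from a purely topological characterisation of arcs, exactly as in \cite[Proposition 4.4(a)]{RRRS}. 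This is the key device your proposal is missing: it is what lets you bypass distortion entirely and still conclude that the nested intersection is an arc, not something wilder. Without it, a pure expansion estimate on its own does not obviously yield the arc structure. For part~\eqref{Jbouquet}, the paper also takes a slightly different route from the one you suggest: rather than checking the straight-brush axioms of Aarts--Oversteegen for $J$ directly, it constructs a \emph{one-sided hairy arc} $\tilde{J} = J \cup B$ by compactifying the external address space, following \cite{brushing}, and invokes the fact that a one-sided hairy arc minus its base is a straight brush. Finally, for part~\eqref{Jseparated} the paper does not rerun the Mayer argument at all; it simply cites \cite{LasseNada}, where it is observed that Mayer's conclusion holds for \emph{every} Cantor bouquet, so \eqref{Jseparated} is an immediate corollary of \eqref{Jbouquet}.
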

Note that the fact that $J$ has uncountably many components is also a consequence of \eqref{Jbouquet}. However, it seems worth emphasising this fact.
\subsection*{Structure}
The structure of the paper is as follows. First in Section~\ref{sec.basic} we prove Theorem~\ref{thereisaFatouset}. The proof of Theorem~\ref{theo:main} is then spread across the rest of the paper. 

%
%
%
%
%
\section{Existence of the sets F and J}
\label{sec.basic}
In this section we give the proof of Theorem~\ref{thereisaFatouset}, and so establish the existence of the sets $F$ and $J$ from Definition~\ref{def:JandF}. Firstly, we need a form of expansion for $Z$, which is given in the following lemma.
\begin{lemma}
\label{lemm:deriv}
There exist constants $\mu > 1$ and $M > 0$ such that
\begin{equation}
\label{deriveq}
\ell(DZ(z)) \geq \mu > 1, \quad \text{ a.e. for } \operatorname{Re} z \geq M.
\end{equation}	
\end{lemma}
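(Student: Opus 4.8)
The goal is to bound below the minimal singular value $\ell(DZ(z))$ of the (almost-everywhere defined) derivative matrix of $Z(x+iy) = g(x)h(y)$. First I would write down the Jacobian explicitly. Since $Z_1(x,y) = g(x)h_1(y)$ and $Z_2(x,y) = g(x)h_2(y)$, we have
\[
DZ(x+iy) = \begin{pmatrix} g'(x)h_1(y) & g(x)h_1'(y) \\ g'(x)h_2(y) & g(x)h_2'(y) \end{pmatrix},
\]
valid at all $(x,y)$ where $g'(x)$ exists (a.e.\ in $x$) and where $h$ is differentiable (a.e.\ in $y$, since $h$ is biLipschitz on each period interval). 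The two columns are $g'(x)\,h(y)$ and $g(x)\,h'(y)$, viewing $h(y)$ and $h'(y)$ as vectors in $\R^2$. Recall that $\ell(A) = \min_{|v|=1}|Av|$, and a clean lower bound comes from the formula $\ell(A)^2 \cdot \|A\|_{op}^2 \geq |\det A|^2$, or more directly: for a $2\times 2$ matrix with columns $u, w$, one has $\ell(A) \geq |\det A| / (|u| + |w|)$, or one can use $\ell(A) \geq |\det A|/\max(|u|,|w|)$ up to constants. So the strategy is to bound $|\det DZ|$ from below and the column norms from above.

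The determinant is $\det DZ = g'(x)g(x)\,(h_1(y)h_2'(y) - h_2(y)h_1'(y))$. The scalar factor $h_1 h_2' - h_2 h_1' = \Im(\overline{h(y)}\,h'(y))$ is, geometrically, the rate at which the curve $\eta\cup(-\eta)$ sweeps out angle; since $h$ restricted to $[-\pi/2,\pi/2]$ parametrises $\eta$ biLipschitzly and $\eta$ meets every argument in $[-\pi/2,\pi/2]$ exactly once, the argument of $h(y)$ is strictly monotonic, and biLipschitz control gives constants $0 < \kappa_1 \le |h_1 h_2' - h_2 h_1'| \le \kappa_2$ a.e.; combined with \eqref{hdef} and the biLipschitz bound $|h'(y)| \le L$ a.e., we control all the $h$-factors above and below by constants. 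For the $g$-factors: $g'(x) > 0$ where defined and, by condition \eqref{fun.B}, $g'$ is non-decreasing, while $g$ is increasing; so for $x \ge M$ (with $M$ to be chosen) we have $g(x) \ge g(M)$ and $g'(x) \ge g'(x_0)$ for any fixed $x_0$ at which $g'$ is defined and positive. Thus $\det DZ$ grows at least like $g'(x)g(x) \ge (\text{const})\, g(x)$, which $\to \infty$ as $x \to \infty$.

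The one delicate point is that the column norms also involve $g'(x)$ and $g(x)$, both of which grow, so I cannot just say ``denominator is bounded.'' The fix is that the $g'$ in the determinant appears paired with a $g$, while the larger column (the second one, $g(x)h'(y)$) has norm $\asymp g(x)$; so $\ell(DZ) \gtrsim g'(x)g(x)/\max(g'(x)\cdot 1,\ g(x)\cdot L) \gtrsim g'(x)g(x)/(g(x)\cdot\text{const}) = \text{const}\cdot g'(x)$ once $g(x) \ge g'(x)$, i.e.\ once $x$ is large enough that the ``size'' $g$ dominates the ``slope'' $g'$ — but this is exactly where \eqref{fun.B} could bite, since a very convex $g$ can have $g' > g$. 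The cleaner route, which I expect to be the actual argument, is to use the growth hypothesis \eqref{gconstraint}: $g(x+2\pi) \ge c\,g(x)$ for large $x$, together with monotonicity of $g'$, forces $g'(x) \ge (g(x+2\pi)-g(x))/(2\pi) \ge (c-1)g(x)/(2\pi)$ by the mean value theorem applied on $[x,x+2\pi]$ (using that $g'$ is non-decreasing, so its value at $x$ is at least the average slope over $[x-2\pi,x]$), so in fact $g'(x) \asymp g(x)$ for large $x$, and then $\ell(DZ) \gtrsim g'(x)g(x)/g(x) \asymp g(x) \to \infty$. Hence choosing $M$ large enough that $g(x)$ exceeds the required threshold, and setting $\mu$ accordingly (any value $>1$ works for $M$ large), gives \eqref{deriveq}. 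The main obstacle is therefore purely this interplay between $g$ and $g'$ in numerator and denominator; handling it correctly via \eqref{gconstraint} and the monotonicity of $g'$ is the crux, and everything involving $h$ is routine biLipschitz bookkeeping.
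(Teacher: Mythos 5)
Your proposal is essentially the paper's argument, with only a cosmetic difference in the linear algebra: you bound $\ell(DZ)$ below by $|\det DZ|/\|DZ\|$, whereas the paper factors $DZ$ as a product of the $h$-matrix with $\operatorname{diag}(g'(x),g(x))$ and uses $\ell(AB)\ge\ell(A)\,\ell(B)$; either way one arrives at $\ell(DZ)\ge c_h\min\{g(x),g'(x)\}$, and the crux — combining \eqref{gconstraint} with monotonicity of $g'$ to get $g'(x)\ge\tfrac{c-1}{2\pi}\,g(x-2\pi)$ — is the same estimate as the paper's \eqref{eq:g'growth}.

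Two small slips worth flagging, neither of which breaks the argument. First, your displayed inequality $g'(x)\ge\bigl(g(x+2\pi)-g(x)\bigr)/(2\pi)$ points the wrong way: since $g'$ is non-decreasing, the average slope over $[x,x+2\pi]$ is at \emph{most} $g'(x+2\pi)$, so the correct statement is the one in your parenthetical, $g'(x)\ge\bigl(g(x)-g(x-2\pi)\bigr)/(2\pi)\ge\tfrac{c-1}{2\pi}\,g(x-2\pi)$, exactly as the paper derives. Second, $g'(x)\asymp g(x)$ is an overclaim: the hypotheses give only the lower bound on $g'$; nothing prevents $g'(x)$ from being much larger than $g(x)$. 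But you never need the two-sided comparison — from $\ell(DZ)\ge c_h\min\{g(x),g'(x)\}$ and the fact that both $g(x)$ and $g(x-2\pi)$ tend to infinity (via \eqref{eq:ggrowth}), one can pick $M$ so that the right-hand side exceeds any prescribed $\mu>1$ for $\operatorname{Re} z\ge M$, which is all the lemma requires.
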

\begin{proof}
Let $c>1$ be the constant from \eqref{gconstraint}. We claim first that there exist $C>0$ and $x_0>0$ such that
\begin{equation}
\label{eq:ggrowth}
g(x) \geq C c^{\frac{x}{2\pi}}, \qfor x \geq x_0.
\end{equation}
To prove this claim, choose $x_0$ sufficiently large that \eqref{gconstraint} holds for $x \geq x_0$. Set
\[
C \defeq \max \{ r \in \R : g(x) \geq r c^{\frac{x}{2\pi}}, \text{ for }x \in [x_0, x_0 + 2\pi] \}.
\]
It is easy to see that this maximum exists and is positive. Now, suppose that $x \geq x_0$. Set $x = x' + 2\pi k'$, where $x' \in [x_0, x_0 + 2\pi)$ and $k'$ is a non-negative integer. Then, by repeated application of \eqref{gconstraint}, and by the definition of $C$,
\[
g(x) \geq c^{k'} g(x') \geq C c^{k'} c^{\frac{x'}{2\pi}} = C c^{\frac{x}{2\pi}}.
\]
This equation completes the proof of our first claim.

We also require a growth condition on $g'$. We claim that there exists $x_0'>0$ such that
\begin{equation}
\label{eq:g'growth}
g'(x)\geq \frac{c-1}{2\pi} g(x-2\pi), \quad \text{ a.e. for } x \geq x_0'.
\end{equation}

To prove this claim, note that since $g'$ is monotonically non-decreasing where defined, we have that for sufficiently large values of $x$,
	\begin{equation*}
	g(x+2\pi)-g(x)=\int_{x}^{x+2\pi} g'(t)\, dt \leq 2\pi g'(x+2\pi), \quad \text{ a.e. }
	\end{equation*}
	Combining this with
	\begin{equation*}
	g(x+2\pi)-g(x) \geq (c-1)g(x),
	\end{equation*}
	we obtain the result.

For the derivative of $Z$ we have, whenever it exists, that
\begin{equation*}
DZ(x+iy)= \left(\begin{array}{cc}%
g'(x) h_1(y) & g(x) h_1'(y)  \\
g'(x) h_2(y) & g(x) h_2'(y)
\end{array}\right)
= \left(\begin{array}{cc}%
h_1(y) & h_1'(y)  \\
h_2(y) & h_2'(y)
\end{array}\right) \cdot 
\left(\begin{array}{cc}%
g'(x)  & 0  \\
  0 & g(x)
\end{array}\right).
\end{equation*}
The fact that $h$ is biLipschitz yields that $h'$ exists almost everywhere.
Since $|h(y)| \geq \hmin > 0$, where the derivative exists we obtain
\begin{align*}
\inf\limits_{|w| = 1} |DZ(x,y)\cdot w| 
&=  \inf\limits_{|w| = 1} \left| \left(\begin{array}{cc}%
h_1(y) & h_1'(y)  \\
h_2(y) & h_2'(y)
\end{array}\right) \cdot 
\left(\begin{array}{cc}%
g'(x)  & 0  \\
0 & g(x)
\end{array}\right) \cdot w \right| \\
&\geq \min\{g(x),g'(x)\} \inf\limits_{|w| = 1} \left|\left(\begin{array}{cc}%
h_1(y) & h_1'(y)  \\
h_2(y) & h_2'(y)
\end{array}\right) \cdot w \right| \\
&\geq c_h\cdot\min\{g(x),g'(x)\},
\end{align*}
for a suitable constant $c_h >0$, which depends only on the Lipschitz constant of $h$; see \cite[Section 2]{Bergweiler}. Since $c>1$ the result then follows from \eqref{eq:ggrowth} and \eqref{eq:g'growth}.
\end{proof}
\begin{proof}[Proof of Theorem~\ref{thereisaFatouset}]
Let $M>0$ be the constant from Lemma~\ref{lemm:deriv}. Since $h$ is biLipschitz, there exists a constant $L\geq 1$ such that 
\[
|h'(y)|\leq L, \quad\text{a.e. for } y\in \R.
\]
Since $g(x)$ and $g'(x)$ (where defined) both tend to $0$ as $x$ tend to $-\infty$, we can choose $m<0$ sufficiently small that 
\[
g(x)+g'(x) \leq \frac{1}{2(1+L)}, \quad\text{a.e. for } x \leq m.
\]
We deduce that
\begin{align*}
||DZ(x+iy)|| &\leq |g'(x) h_1(y)| + |g(x) h_1'(y)| + |g'(x) h_2(y)| + |g(x) h_2'(y)| \\
             &\leq (L+1)(g'(x)+g(x)) \\
		   		 	 &\leq \frac{1}{2}, \quad\text{a.e. for } x \leq m.
\end{align*}
Since $DZ(x+iy)=Df(x,y)$ it follows that 
\[
|f(z_1) - f(z_2)| \leq \frac{1}{2}|z_1 - z_2|, \qfor z_1, z_2 \in \mathbb{H}_m.
\]

Now choose 
\begin{equation}
\label{adef}
%
%
a > \max \{0, g(M) - m, g(M) - M \}.
\end{equation}
(Note that the choice of $a$ here is stronger than is required in this proof, but convenient for use in later results).

If $x \leq M$, then
\[
\operatorname{Re} f(x+iy) = g(x) h_1(y) - a \leq g(M) - a \leq m.
\]
In other words, $f(\mathbb{H}_M) \subset \mathbb{H}_m$. Hence $f$ is a contraction mapping on $\mathbb{H}_m$, and so $\mathbb{H}_m$ contains a unique attracting fixed point $\xi$ by the Banach fixed point theorem. Since $f$ is expanding in the complement of $\mathbb{H}_M$, by Lemma~\ref{lemm:deriv}, the uniqueness of $\xi$ is immediate.
\end{proof}
In the remainder of the paper we will assume that $f, g, h, Z$ are as defined above, that $f$ is a generalised exponential, and that $a$ has been chosen such that \eqref{adef} holds.

 \begin{figure}
  \subfloat{\includegraphics[width=.45\textwidth]{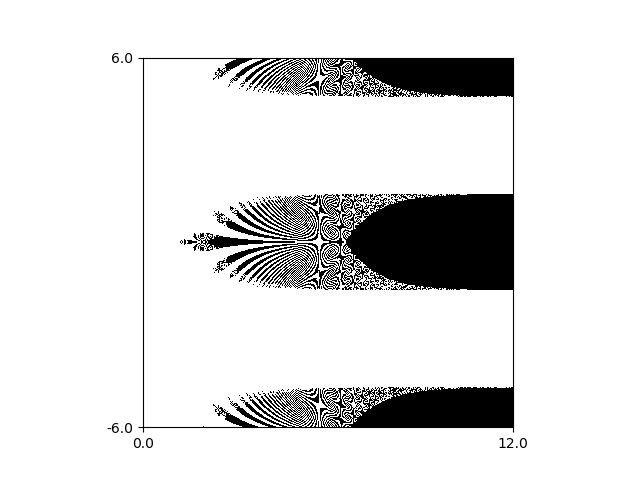}}\hfill
  \subfloat{\includegraphics[width=.45\textwidth]{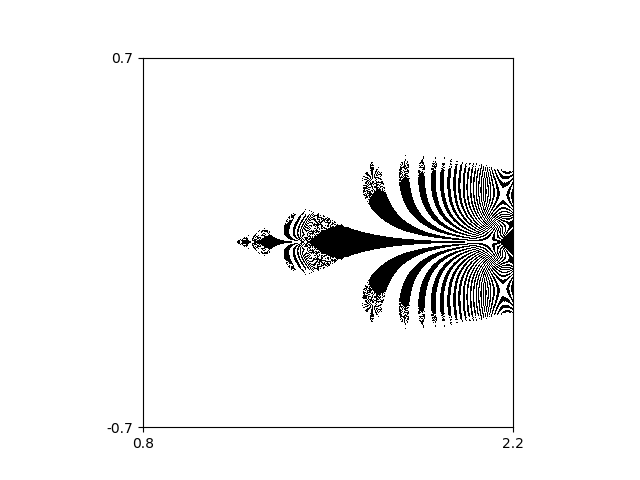}}
   \caption{\label{f2}An illustration of $J$ for the generalised exponential with $a=2$, with $g(x) = e^x$, and with $h$ being the obvious extension of the two linear maps from $[-1, 0]$ to $[-i, 1]$ and from $[0, 1]$ to $[1, i]$.}
 \end{figure}


\begin{figure}
  \subfloat{\includegraphics[width=.45\textwidth]{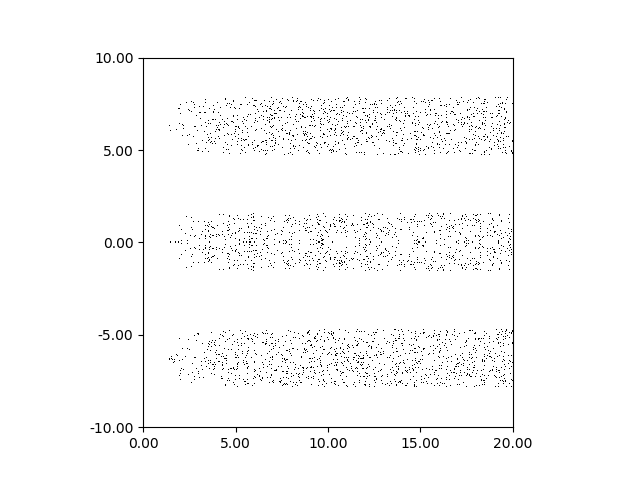}}\hfill
  \subfloat{\includegraphics[width=.45\textwidth]{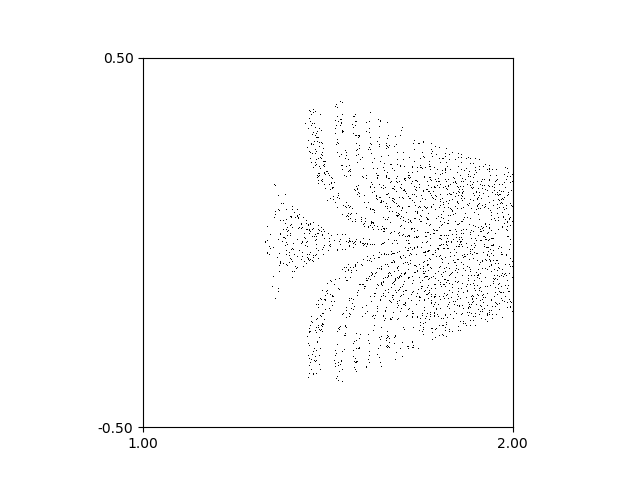}}
   \caption{\label{f3}An illustration of $J$ for the function with $a=1$, with $h(y) = (\cos y, \sin y)$, and with $g(x) = 0$ in the left half-plane and $g(x) = x^3$ in the right half-plane. Note that this function is \emph{not} covered by the results of this paper. However, it is still possible to define $J$ in a similar way, and we still observe some of the features of $J$ that we might expect.}
 \end{figure}

\section{Symbolic dynamics}
\label{sec.symbolicdynamics}
In this section we define tracts and external addresses, and then use these to establish symbolic dynamics on $J$. We begin by defining the tracts of the function $g$. Since $-a < 0 < M$, we have that $\mathbb{H}_{-a} \subset \mathbb{H}_M$, and so points with imaginary part in an interval $[(4k+1)\pi/2, (4k+3)\pi/2]$, for some $k \in \Z$, necessarily lie in $F$. 

\begin{definition}
\label{def:various}
For each $k \in \Z$, define the \emph{tract} $T_k$ by
\[
T_k \defeq \left\{ x + iy \in \C : x > M \text{ and } \frac{(4k-1)\pi}{2} < y < \frac{(4k+1)\pi}{2} \right\}.
\]
Also set
\begin{equation}
\label{eq:Hdef}
H \defeq f(T_0).
\end{equation}
\end{definition}
Clearly $H$ is the image of any tract. Geometrically $H$ is the right half-plane with a bounded set removed; in particular
\[
H = \{ z \in \C : \operatorname{Re} z > a \} \setminus \overline{f(\{ x + iy \in \C : x \in (0, M) \text{ and } y \in (-\pi/2, \pi/2)\})}.
\]

We stress that the sets $T_k$ are not tracts in the sense usually defined for functions in the class $\B$. However, if $T_k, T_{k'}$ are both tracts, then it follows by \eqref{adef} that $\overline{T_{k'}} \subset f(T_k) = H$; abusing slightly the terminology of class $\B$ maps, $f$ is of \emph{disjoint type}. 

Note also that if $T_k$ is a tract, then $f : T_k \to H$ is a continuous bijection, and in fact the same is true for $f : \overline{T_k} \to \overline{H}$. (This follows from the definitions of $g$ and $h$; in fact $f$ is a bijection on a set slightly larger than $T_k$.) It follows that $f : T_k \to H$ is a homeomorphism. We denote the inverse of this restriction by $f_k^{-1}$.

More generally, if $k_1 k_2 \ldots k_n$ is a finite sequence of integers, then we define $f_{k_1 k_2 \ldots k_n}^{-n} \defeq f_{k_1}^{-1} \circ \ldots \circ f_{k_n}^{-1}$.

Next we consider the components of $J$ and define the notion of external addresses. 
\begin{proposition}
\label{prop:unbounded}
Every component of $J$ is unbounded.
\end{proposition}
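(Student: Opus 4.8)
The plan is to argue by contradiction: I would enclose a hypothetical bounded component of $J$ in a bounded open set whose topological boundary misses $J$, and then produce a point of $J$ lying on that boundary. Two facts from the material above set this up. First, $J\subset\bigcup_{k\in\Z}T_k$: by Theorem~\ref{thereisaFatouset} every point of $\mathbb{H}_M$ tends to $\xi$, so $J\subset\{\re z>M\}$, and, as noted above, points whose imaginary part lies in some interval $[(4k+1)\pi/2,(4k+3)\pi/2]$ belong to $F$. Second, $F$, and hence $J$, is completely invariant. Consequently every $z\in J$ has a well-defined address $\s=s_0s_1s_2\ldots\in\Z^{\No}$ with $f^n(z)\in T_{s_n}$ for all $n\geq0$.

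Fix $z_0\in J$ with address $\s$ and set $A_n\defeq f^{-n}_{s_0s_1\ldots s_{n-1}}(\overline{T_{s_n}})$ for $n\geq0$. Since $\overline{T_{s_n}}\subset H$ each $A_n$ is defined; it is closed and connected (the set $\overline{T_{s_n}}$ is convex and each $f_k^{-1}$ is a homeomorphism of $\overline{H}$), it contains $z_0$, and every $p\in A_n$ satisfies $\re f^j(p)\geq M$ for $0\leq j\leq n$, because $f^j(p)\in\overline{T_{s_j}}$. I would also check that $A_n$ is unbounded: within a fixed tract a point tends to $\infty$ exactly when its real part does, the tract being a thin strip, which in turn happens exactly when $|f(z)|=|g(\re z)h(\Im z)-a|\to\infty$, using that $g$ is increasing with $g(x)\to\infty$ and $\hmin\leq|h|\leq1$; hence each $f_k^{-1}$ carries sequences tending to $\infty$ to sequences tending to $\infty$, and $\overline{T_{s_n}}$ is unbounded.

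Now suppose the component $C$ of $J$ containing $z_0$ is bounded, hence compact, since $J$ is closed. I would first obtain a bounded open set $U$ with $z_0\in U$ and $\partial U\cap J=\emptyset$. Indeed, if for every large disc the component of $z_0$ in the compact set $J\cap\overline{D(0,R)}$ met the circle $\{|z|=R\}$, then the closure of the increasing union of these connected sets would be an unbounded connected subset of $J$ through $z_0$, contradicting boundedness of $C$; so for some $R$ that component avoids $\{|z|=R\}$, and, components coinciding with quasicomponents in the compact space $J\cap\overline{D(0,R)}$, finitely many relatively clopen sets intersect to a compact relatively clopen subset $V$ of $J$ with $z_0\in V\subset D(0,R)$, and a small closed neighbourhood of $V$ serves as $U$. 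As $z_0\in A_n\cap U$ while $A_n$ is connected and unbounded, $A_n$ meets $\partial U$; pick $p_n\in A_n\cap\partial U$. Then $p_n\notin J$, yet $\re f^j(p_n)\geq M$ for $0\leq j\leq n$. A subsequence $p_{n_k}$ converges to some $p_*\in\partial U$, and continuity of each $f^j$ gives $\re f^j(p_*)\geq M$ for every $j\geq0$; since $\re\xi\leq m<M$ this forces $f^j(p_*)\not\to\xi$, i.e.\ $p_*\in J$, contradicting $\partial U\cap J=\emptyset$. Hence $C$ is unbounded.

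The step I expect to be the main obstacle is the topological construction of $U$: one must be careful that a bounded component of the closed set $J$ really can be enclosed by an open set with boundary disjoint from $J$. This rests on local compactness of $\C$ together with the agreement of components and quasicomponents in a compact Hausdorff space; everything else, namely the properties of the sets $A_n$ and the compactness argument extracting $p_*$, is routine.
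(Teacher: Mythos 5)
Your argument is correct, but it takes a genuinely different route from the paper. The paper writes $J \cup \{\infty\}$ as a nested intersection of the continua $F_n = \bigcup_{k_1\ldots k_n} f^{-n}_{k_1\ldots k_n}(H) \cup \{\infty\}$ in $\hat\C$, obtains that $J\cup\{\infty\}$ is itself a continuum, and then invokes the Boundary bumping theorem in one line to conclude that every component is unbounded. You instead argue directly by contradiction: through any $z_0\in J$ you construct the unbounded, closed, connected sets $A_n = f^{-n}_{s_0\ldots s_{n-1}}(\overline{T_{s_n}})$, then enclose a hypothetical bounded component by an open set $U$ whose boundary misses $J$ (using that components and quasicomponents agree in the compact metric space $J\cap\overline{D(0,R)}$), extract $p_n\in A_n\cap\partial U$, and pass to a limit $p_*\in\partial U$ whose entire orbit stays in $\{\re z\geq M\}$, forcing $p_*\in J$. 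In effect you are re-deriving the relevant instance of boundary bumping from scratch; what you gain is a self-contained, first-principles argument, and what you lose is brevity. Two small points to make explicit: you need $J$ to be closed to get compactness of $J\cap\overline{D(0,R)}$ --- this holds because $F = \bigcup_n (f^n)^{-1}(\{\re w < M\})$ is open, but state it; and the ``small closed neighbourhood of $V$'' should really be the open $\eps$-neighbourhood with $\eps<\dist(V, J\setminus V)$, which is positive because $V$ is compact and $J\setminus V$ is closed and disjoint from it. With those clarifications the proof stands.
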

\begin{proof}
Note that if $Y \subset H$ is connected and unbounded, and $k \in \Z$, then $f_{k}^{-1}(Y)$ is connected and unbounded; connectedness follows from continuity, and unboundedness is a consequence of the fact that $f$ is a homeomorphism of the closure of each tract.

For each $n \in \N$, consider the set 
\[
F_n = \bigcup_{k_1 \ldots k_n \in \Z^n} f^{-n}_{k_1 \ldots k_n}(H) \cup \{\infty\}.
\]

Then, considered as a subset of the Riemann sphere, $F_n$ is connected, by the above remark, and compact; in other words, $F_n$ is a continuum. It follows that $$J \cup \{\infty\} = \bigcap_{n \in \N} F_n$$ is a nested intersection of continua, and so is itself a continuum. The result then follows by the ``Boundary bumping theorem''; see, for example, \cite[Theorem 5.6]{Nadler}.
\end{proof}

We write $\No = \N \cup \{0\}$.
\begin{definition}
\label{def:admissible}
Observe that $z \in J$ if and only if there is an \emph{external address} $\s = s_0 s_1 \ldots \in \Z^{\No}$ such that $f^n(z) \in T_{s_n}$, for $n \geq 0$. We write $s = \addr(z)$. We also write $J_{\s}$ for the set of points with external address $\s$. Finally we let $\hat{J_{\underline{s}}}$ denote the closure of $J_{\s}$ in $\hat{\C}$. If $\s$ is an external address such that $J_{\s} \ne \emptyset$, then we say that $\s$ is \emph{admissible}.
\end{definition}

The conclusions of the following observation are straightforward, and the proof is omitted. Here $\sigma$ is the \emph{Bernoulli shift map} defined by $\sigma(s_0 s_1 s_2 \ldots) = s_1 s_2 \ldots$.
\begin{observation}
\label{obs:basics}
Suppose that $\s = s_0 s_1 \ldots$ and $\s'$ are admissible external addresses, with $\s \ne \s'$. Then all the following hold.
\begin{enumerate}
\item $J_{\s} = \bigcap_{n \in \N} f_{s_0 s_1 \ldots s_{n-1}}^{-n}(H)$. 
\item $J_{\s}$ and $J_{\s'}$ are disjoint.
\item If $n \in \N$, then $f^n(J_{\s}) = J_{\sigma^n(\s)}$.
\end{enumerate}
\end{observation}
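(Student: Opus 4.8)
The plan is to establish the three claims in turn, using only the definition of the tracts $T_k$ and of the external address, together with the facts already recorded above: for each tract $T_k$ the map $f|_{T_k}\colon T_k \to H$ is a homeomorphism with inverse $f_k^{-1}$ (so $f\circ f_k^{-1} = \operatorname{id}_H$ and $f_k^{-1}(H) = T_k$), the tracts are pairwise disjoint, and $\overline{T_{k'}}\subset H$ for every tract $T_{k'}$, so in particular each tract lies in $H$.

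For part (1) I would prove both inclusions. Suppose $z \in J_{\s}$, so $f^j(z)\in T_{s_j}$ for all $j\ge 0$. Then $f^{j+1}(z) = f(f^j(z)) \in f(T_{s_j}) = H$, and since $f^j(z)\in T_{s_j}$ we have $f^j(z) = f_{s_j}^{-1}(f^{j+1}(z))$. Composing these identities from $j=0$ up to $j=n-1$ gives $z = f_{s_0\ldots s_{n-1}}^{-n}(f^n(z)) \in f_{s_0\ldots s_{n-1}}^{-n}(H)$ for every $n$, so $J_{\s}$ is contained in the intersection. Conversely, if $z$ lies in the intersection then for each $n$ we may write $z = f_{s_0\ldots s_{n-1}}^{-n}(w)$ for some $w\in H$ (a priori depending on $n$); applying $f$ repeatedly and using $f\circ f_{s_j}^{-1} = \operatorname{id}_H$ yields $f^k(z) = f_{s_k\ldots s_{n-1}}^{-(n-k)}(w)$ for $0\le k<n$, and the outermost branch $f_{s_k}^{-1}$ has image $T_{s_k}$, so $f^k(z)\in T_{s_k}$. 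As $n$ is arbitrary this holds for all $k\ge 0$, so by Definition~\ref{def:admissible} we have $z\in J$ with $\addr(z)=\s$, i.e.\ $z\in J_{\s}$.

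Part (2) is immediate from the disjointness of the tracts: if $z\in J_{\s}\cap J_{\s'}$ with $\s\ne\s'$, choose $n$ with $s_n\ne s_n'$; then $f^n(z)\in T_{s_n}\cap T_{s_n'}=\emptyset$, a contradiction. For part (3), the inclusion $f^n(J_{\s})\subset J_{\sigma^n(\s)}$ follows directly from the definition, since for $z\in J_{\s}$ and $m\ge 0$ we have $f^m(f^n(z)) = f^{m+n}(z)\in T_{s_{n+m}}$. For the reverse inclusion, given $w\in J_{\sigma^n(\s)}$ we have $w\in T_{s_n}\subset H$, so $z\defeq f_{s_0\ldots s_{n-1}}^{-n}(w)$ is defined; one checks $f^n(z)=w$ and, exactly as in part (1), that $f^k(z)\in T_{s_k}$ for all $k\ge 0$, so $z\in J_{\s}$ and $w=f^n(z)\in f^n(J_{\s})$.

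I do not expect a genuine obstacle here; this is why the authors omit the proof. The only point needing a little care is the bookkeeping in parts (1) and (3): one must use consistently that $f\circ f_k^{-1}=\operatorname{id}_H$ and that the image of each inverse branch $f_k^{-1}$ is precisely $T_k$, and one should note that the point $w$ witnessing membership of $z$ in $f_{s_0\ldots s_{n-1}}^{-n}(H)$ may vary with $n$ — but since for each $n$ we only read off the finitely many coordinates $f^0(z),\ldots,f^{n-1}(z)$, this causes no difficulty.
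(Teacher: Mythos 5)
Your proof is correct and is essentially the argument the authors had in mind; the paper explicitly omits the proof because it is, as you show, a direct unpacking of Definition~\ref{def:admissible} together with the facts that the tracts are pairwise disjoint, that $f|_{T_k}\colon T_k \to H$ is a homeomorphism, and that $\overline{T_{k'}}\subset H$ for every tract. Your remark at the end — that the witnessing point $w$ in part (1) may depend on $n$, but that this is harmless because for each fixed $n$ you only read off $f^0(z),\dots,f^{n-1}(z)$ — is exactly the one place where a careless reader might stumble, and it is good that you flagged it.
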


Our next step is to characterise the admissible external addresses. 
\begin{definition}
We say that an external address $\s \in \Z^{\No}$ is \emph{$g$-bounded} if there exists $x_0 \geq 0$ such that
\begin{equation}
\label{gbound}
2\pi |s_n| \leq g^n(x_0), \qfor n \in \No.
\end{equation}
\end{definition}
Note that the constant $2\pi$ in \eqref{gbound} can, in fact, be replaced by any positive constant; indeed, this comment also applies to the choice of the constant $2\pi$ in the definition of admissible external addresses in \cite{DevaneyandKrych}. We have used $2\pi$ here for consistency.

We show that the admissible external addresses are identically the external addresses that are $g$-bounded, provided that $g$ satisfies (\ref{gconstraint}).
\begin{theorem}
\label{thm:gbounded-admissible}
Suppose that \eqref{gconstraint} holds, and that $\s \in \Z^{\No}$. Then $\s$ is admissible if and only if $\s$ is $g$-bounded.
\end{theorem}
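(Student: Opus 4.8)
The plan is to prove the two implications separately, the ``only if'' direction being a soft consequence of the contraction estimate on $\mathbb{H}_m$ together with the definition of the tracts, and the ``if'' direction requiring a quantitative construction of a point with the prescribed address, using the expansion from Lemma~\ref{lemm:deriv} and the growth estimate \eqref{eq:ggrowth}.

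First I would treat the ``only if'' direction. Suppose $\s = \addr(z)$ for some $z \in J_{\s}$, so that $f^n(z) \in T_{s_n}$ for all $n \geq 0$. By definition of $T_{s_n}$, the imaginary part of $f^n(z)$ is roughly $2\pi s_n$, while the real part of $f^n(z)$ exceeds $M$. I want to bound $|s_n|$ in terms of an iterated image of $g$ at a fixed starting point. The key observation is that $|\Im f^{n+1}(z)| = |g(\Re f^n(z)) h_2(\Im f^n(z)) | \le g(\Re f^n(z))$, since $|h_2| \le |h| \le 1$ by \eqref{hdef}, and $\Re f^n(z) = g(\Re f^{n-1}(z)) h_1(\Im f^{n-1}(z)) - a \le g(\Re f^{n-1}(z)) - a < g(\Re f^{n-1}(z))$. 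Iterating, $\Re f^n(z) < g^n(x_0)$ where $x_0 \defeq \Re f^0(z) = \Re z$ (increasing $x_0$ if necessary so that $x_0 \ge 0$ and the iterates stay in the range where $g$ is being applied to non-negative arguments); and hence $2\pi|s_n| \le |\Im f^n(z)| + \pi \le g^n(x_0) + \pi$, which after a harmless adjustment of $x_0$ gives \eqref{gbound}. So $\s$ is $g$-bounded.

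For the ``if'' direction, assume $\s$ is $g$-bounded with constant $x_0$, and I want to produce a point $z$ with $f^n(z) \in T_{s_n}$ for all $n$. The standard approach for disjoint-type exponential-like maps is a shrinking-target / pull-back argument: one shows that the nested sets $f^{-n}_{s_0 s_1 \ldots s_{n-1}}(\overline{H})$ have nonempty intersection. Connectedness and compactness (in $\widehat{\C}$) of each $f^{-n}_{s_0 \ldots s_{n-1}}(\overline H)$ follow as in the proof of Proposition~\ref{prop:unbounded}, so the intersection is automatically a nonempty continuum; the real work is to show the intersection actually meets $\C$ (is not just $\{\infty\}$), equivalently that there is a genuine finite point with the given itinerary. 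Here I would use the expansion: $f_k^{-1}$ is a contraction on $\overline H$ with a uniform factor $1/\mu < 1$ coming from Lemma~\ref{lemm:deriv}. Fixing a basepoint $w_0 \in H$ (say a point with large real part on the real axis), one estimates how far $f_{s_0}^{-1} \circ \cdots \circ f_{s_{n-1}}^{-1}(w_0)$ moves as $n$ increases. Because $f_k^{-1}$ maps $\overline H$ into the closure of tract $T_k$, and the relevant tract $T_{s_{n-1}}$ sits at height $\approx 2\pi s_{n-1}$, the diameter of the image piece at stage $n$ is controlled by $\mu^{-1}$ times the previous diameter plus a term reflecting how far tract $T_{s_{n-1}}$ is from $T_{s_n}$; and the $g$-boundedness hypothesis $2\pi|s_n| \le g^n(x_0)$, combined with the lower bound $g(x) \ge C c^{x/(2\pi)}$ from \eqref{eq:ggrowth} and $c > 1/\hmin$, is exactly what makes the resulting series converge. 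More precisely, one shows that for $a$ large enough the iterated preimages $f^{-n}_{s_0 \ldots s_{n-1}}(w_0)$ form a Cauchy sequence, hence converge to a finite point $z$, which then lies in every $\overline{T_{s_n}}$ under the appropriate iterate, and a short open-mapping argument upgrades ``closure of tract'' to ``tract'' so that in fact $f^n(z) \in T_{s_n}$, i.e. $\s$ is admissible.

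The main obstacle is the convergence estimate in the ``if'' direction: one has to marry the fixed contraction constant $1/\mu$ from the expansion with the possibly large horizontal and vertical displacements caused by jumping to tract $T_{s_n}$, and show that $g$-boundedness (together with the condition $c > 1/\hmin$ in \eqref{gconstraint}, which is stronger than mere $c>1$ and is presumably used precisely here) forces the telescoping sum of displacements to be finite. Getting the bookkeeping right — in particular tracking that the real parts of the relevant points stay in the region $\Re z \ge M$ where Lemma~\ref{lemm:deriv} applies, and that the constant $a$ can be taken large enough uniformly — is the delicate part; everything else is soft topology (nested continua, boundary bumping) or routine estimates with $h_1, h_2$ bounded in modulus.
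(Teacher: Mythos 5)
Your ``only if'' direction matches the paper's proof: bound $|\operatorname{Im} f^n(z)|$ by $g^n$ applied to the initial real part, then absorb the additive correction by shifting $x_0$ using \eqref{gconstraint}. That part is fine.

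The ``if'' direction, however, has a genuine gap. You propose pulling back a \emph{fixed} basepoint $w_0 \in H$ and showing that $z_n \defeq f^{-n}_{s_0\ldots s_{n-1}}(w_0)$ is Cauchy, using the uniform expansion constant $\mu$ from Lemma~\ref{lemm:deriv}. But the only estimate this gives is
\[
|z_{n+1}-z_n| \;=\; \bigl|f^{-n}_{s_0\ldots s_{n-1}}(f^{-1}_{s_n}(w_0)) - f^{-n}_{s_0\ldots s_{n-1}}(w_0)\bigr| \;\leq\; \mu^{-n}\,\bigl|f^{-1}_{s_n}(w_0)-w_0\bigr|,
\]
and $|f^{-1}_{s_n}(w_0)-w_0|$ is roughly the vertical displacement to the tract $T_{s_n}$, i.e.\ of order $2\pi|s_n|$, which $g$-boundedness only caps by $g^n(x_0)$. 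Since $g$ grows at least exponentially by \eqref{eq:ggrowth}, the iterates $g^n(x_0)$ grow like a tower of exponentials, which swamps the merely geometric decay $\mu^{-n}$. So the series $\sum_n |z_{n+1}-z_n|$ need not converge, and the proposed Cauchy argument fails for a fixed basepoint. (Your phrase ``diameter of the image piece'' also does not quite parse: the sets $f^{-n}_{s_0\ldots s_{n-1}}(\overline H)$ are unbounded, so they have no finite diameter to iterate on.) To salvage a pull-back argument one would have to move the basepoint to level $r_n$, where $r_{n+1}=\hmin g(r_n)$ grows at the same tower rate as the target orbit; then $f^{-1}_{s_n}(w_{n+1})$ lands within bounded distance of $w_n$ and the $\mu^{-n}$ factor does the rest. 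But at that point you have essentially reconstructed the paper's argument in reverse.

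The paper instead works forwards: it fixes squares $D_n$ of side $2\pi$ with bottom-left corner at $(r_n,(4s_n-1)\pi/2)$, where $r_{n+1}=\hmin g(r_n)$, and proves the covering relation $f(D_n)\supset D_{n+1}$ by showing that $f(D_n)$ contains the round annulus $\{\hmin g(r_n)\leq|z+a|\leq c\,g(r_n)\}$ and that $D_{n+1}$ fits inside it. This is precisely where $c>1/\hmin$ is used: it guarantees the annulus is thick enough to contain $D_{n+1}$, once $\kappa,\delta$ are chosen via $(1+\delta)^2+\kappa^2<c^2/\hmin^2$ and the $g$-boundedness is upgraded to $\max\{3\pi/2,4\pi|s_n|\}\leq\kappa\tilde g^n(x)$. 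Existence of a point with $f^n(z)\in D_n$ for all $n$ then follows from a soft topological itinerary lemma (\cite[Lemma 1]{SlowEscaping}); no contraction estimate is needed, and the upgrade from $\overline{T_{s_n}}$ to $T_{s_n}$ is handled by noting $D_n\setminus T_{s_n}$ maps outside $H$. So while your identification of the relevant ingredients ($\mu$-expansion, \eqref{eq:ggrowth}, the role of $c>1/\hmin$) is correct, the fixed-basepoint contraction scheme you describe is not a working proof, and the missing step is exactly the one you flag as ``the delicate part.''
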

\begin{proof}
First, suppose that $\s = s_0 s_1 \ldots$ is admissible, and so there exists a point $z = x + iy \in \C$ with external address $\s$. Note that 
\[
\operatorname{Im} f(z) \leq |Z(z)| = g(x)|h(y)| \leq g(x),
\]
and indeed 
\[
\operatorname{Im} f^n(z) \leq |Z^n(z)| \leq g^n(x), \qfor n \in \No.
\]
Observe that it follows from \eqref{gconstraint} that there exists $x_0' > 0$ such that
\[
g^n(x + 2\pi) \geq g^n(x) + 2 \pi, \qfor x \geq x_0', \ n \in \No.
\]
Hence, for $n \in \No$, 
\[
2\pi|s_n| \leq \operatorname{Im} f^n(z) + 2\pi \leq g^n(x) + 2\pi \leq g^n(x+x_0') + 2\pi \leq g^n(x + x_0'+ 2\pi),
\]
and so $\s$ is indeed $g$-bounded.

In the other direction, suppose that $\s = s_0 s_1 \ldots \in \Z^{\No}$ is $g$-bounded. Fix $\kappa > 0$ small enough that $$\kappa^2 < \frac{c^2}{\hmin^2} - 1.$$ Let $\tilde{g}(x) \defeq \hmin g(x)$. It can be deduced from \eqref{gconstraint} and \eqref{gbound} that there exists $x_0'' \geq 0$ such that
\begin{equation}
\label{gbound2}
\max\{3\pi/2, 4\pi |s_n|\} \leq \kappa \tilde{g}^n(x), \qfor x \geq x_0'' \text{ and } n \in \No.
\end{equation}
Choose $\delta > 0$ sufficiently small that $$(1+\delta)^2 < \frac{c^2}{\hmin^2} - \kappa^2,$$ and choose $$r_0 > \max\left\{M, x_0'', \frac{2\pi+a}{\delta}\right\}.$$ Increasing $r_0$, if necessary, we can also assume that all points of real part at least $r_0$ lie in $H$. We then set $r_{k+1} = \tilde{g}(r_k)$, for $k \in \No$.

For each $n \in \No$, let $D_n$ be the closed square of side $2\pi$, with sides parallel to the axes, and with bottom left vertex at the point $(r_n, (4s_n-1)\pi/2)$.

We claim that $f(D_n) \supset D_{n+1}$, for $n \in \No$. To prove the claim, first fix $n \in \No$. Note that, by a calculation, $f(D_n)$ contains the annulus
\[
A_n \defeq \{ z \in \C : \hmin g(r_n) \leq |z+a| \leq c g(r_n)\}.
\]

Since $\hmin g(r_n) = r_{n+1}$, we can see that $D_{n+1}$ does not lie inside the inner radius of $A_n$. Hence it remains to prove that $D_{n+1}$ does not lie outside the outer radius of this annulus. Without loss of generality we can assume that $s_{n+1}$ is non-negative. A furthermost point of $D_{n+1}$ from $(-a, 0)$ is the point $(r_{n+1} + 2\pi, (4s_{n+1}+3)\pi/2)$. Hence the square of the distance from $(-a, 0)$ to any point of $D_{n+1}$ is at most
\[
(r_{n+1} + a + 2\pi)^2 + \left(\frac{(4s_{n+1}+3)\pi}{2}\right)^2 \leq r_{n+1}^2(1+\delta)^2 + r_{n+1}^2 \kappa^2 \leq r_{n+1}^2 \cdot \frac{c^2}{\hmin^2},
\]
where we have used \eqref{gbound2}, together with the choices of $\delta$ and $\kappa$. Since the outer radius of $A_n$ is $c g(r_n) = c r_{n+1} / \hmin$, this completes the proof of the claim.

It follows by, for example, \cite[Lemma 1]{SlowEscaping}, that there is point $z$ such that $f^n(z) \in D_n$, for $n \in \No$. Since $D_n \setminus T_{s_n}$ maps to the complement of $H$, we in fact have that $f^n(z) \in T_{s_n}$, for $n \in \No$. In other words, $z \in J_{\s}$, which completes the proof.
\end{proof}

\section{Devaney Hairs}
\label{sec.hairs}
Our goal in this section is to show that each component of $J$ is a Devaney hair. Part \eqref{Jhairs} of Theorem~\ref{theo:main} follows, since there are uncountably many $g$-bounded, and hence admissible, external addresses. Note that this requires that we establish the three properties \eqref{it:dev1}, \eqref{it:dev3} and \eqref{it:dev2}.

We first show that our function $f$ satisfies a \emph{uniform head-start condition}; this terminology is from \cite{RRRS}. This is a key ingredient in the arguments we use in the remainder of this paper. 
%
%
%
We require the following expansion estimate, which follows from \eqref{deriveq}; recall that $\mu > 1$.
\begin{proposition}
\label{prop:expansion}
Suppose that $f$ is a generalised exponential function, that $n \in \N$, and that $U$ is a component of $f^{-n}(\C \setminus \mathbb{H}_M)$. Then
\[
|f^n(z) - f^n(w)| \geq \mu^n |z - w|, \qfor z, w \in U.
\] 
\end{proposition}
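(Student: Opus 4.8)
The plan is to use the chain rule together with the local bi-Lipschitz behaviour of the inverse branches, or equivalently to iterate the pointwise expansion estimate from Lemma~\ref{lemm:deriv}. The key observation is that if $U$ is a component of $f^{-n}(\C \setminus \mathbb{H}_M)$, then for $0 \le j < n$ the set $f^j(U)$ lies in a single component of $f^{-(n-j)}(\C \setminus \mathbb{H}_M)$, and in particular $f^j(U) \subset \C \setminus \mathbb{H}_M = \{\operatorname{Re} z > M\}$; indeed $f^j(U)$ lies in one of the tracts $T_k$. Thus along the orbit of any point of $U$, every point $f^j(z)$ for $j < n$ has real part exceeding $M$, so that the pointwise bound $\ell(Df(\cdot)) \ge \mu$ from \eqref{deriveq} applies almost everywhere on each $f^j(U)$.

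First I would make precise the statement that $f^j(U)$ is contained in $\{\operatorname{Re} z > M\}$ for $j = 0, 1, \dots, n-1$: since $f^n(U) \cap \mathbb{H}_M = \emptyset$ and $U$ is connected, each forward image $f^j(U)$ is connected, avoids $\mathbb{H}_M$ when $j < n$ by definition of $U$ as a preimage component, and so lies entirely in one tract $T_{k_j}$. On each tract $f$ restricts to a homeomorphism onto $H$ with inverse $f_{k_j}^{-1}$, which is differentiable almost everywhere by the remarks following the definition of a generalised exponential. Then, for $z, w \in U$, I would join $f^n(z)$ to $f^n(w)$ by a rectifiable path in $f^n(U) \subset H$ (possible since $H$ is, up to removing a bounded set, a half-plane, hence path-connected and indeed quasiconvex on the relevant scale), pull it back under $f_{k_0 k_1 \dots k_{n-1}}^{-n}$, and use the fact that a map satisfying $\ell(Df) \ge \mu$ a.e.\ contracts lengths of paths by a factor at least $\mu$ under each inverse branch; iterating over the $n$ branches gives that the pulled-back path has length at most $\mu^{-n}$ times the original, whence $|z - w| \le \mu^{-n}|f^n(z) - f^n(w)|$, which rearranges to the claim. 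Alternatively, and perhaps more cleanly, one argues by induction on $n$: the case $n=1$ is exactly the statement that $\ell(Df) \ge \mu$ a.e.\ on $\C \setminus \mathbb{H}_M$ forces $|f(z) - f(w)| \ge \mu |z-w|$ for $z,w$ in a connected subset of $\{\operatorname{Re} z > M\}$ (via integration along a path), and the inductive step applies this to $f(U)$, which is a component of $f^{-(n-1)}(\C \setminus \mathbb{H}_M)$.

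The main obstacle, and the point that needs genuine care rather than routine work, is the passage from the \emph{pointwise} derivative bound $\ell(Df(z)) \ge \mu$ \emph{almost everywhere} to the \emph{global} bi-Lipschitz lower bound $|f(z) - f(w)| \ge \mu|z-w|$ on a connected set. This is not automatic for merely continuous, a.e.-differentiable maps: one needs some absolute continuity along curves. Here it is available because $f$ restricted to a tract is a homeomorphism onto $H$ whose inverse is also continuous and differentiable almost everywhere, and one can choose the connecting path in $f^n(U)$ to be a line segment (or a short polygonal arc avoiding the removed bounded part of $H$) along which the relevant Sobolev/absolute-continuity properties hold; I would cite the standard fact (as in the treatment of Zorich-type maps, e.g.\ \cite{Bergweiler}) that for such maps the length distortion estimate integrates up. Once that lemma is in hand, the iteration to get the factor $\mu^n$ is immediate from the chain rule, using at each stage that $f^j(U)$ stays in the expanding region $\{\operatorname{Re} z > M\}$.
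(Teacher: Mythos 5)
Your approach is essentially the same as the paper's: invert $f^n$ on the component $U$, join $f^n(z)$ to $f^n(w)$ by a path, and integrate the a.e.\ derivative estimate from Lemma~\ref{lemm:deriv} to obtain the $\mu^{-n}$ contraction of the inverse branch. The paper streamlines this by observing that the target $\C\setminus\mathbb{H}_M$ is convex, so the connecting path is simply a line segment (sidestepping the quasiconvexity discussion you give for $H$); the absolute-continuity subtlety you flag is real, but the paper leaves it implicit.
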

\begin{proof}
Let $\phi : \C\setminus\mathbb{H}_M \to U$ be the inverse to $f^n$. Since $\C\setminus\mathbb{H}_M$ is convex, it follows by \eqref{deriveq} that, if $z', w' \in \C\setminus\mathbb{H}_M$, then
\[
|\phi(z')-\phi(w')| \leq \int_{[z',w']} |\phi'(\zeta)| |d\zeta|
                    \leq |z'-w'| \esssup_{\zeta\in[z',w']} |D\phi(\zeta)|
										\leq \frac{1}{\mu^n} |z'-w'|,
\]
where $[z',w']$ denotes the line segment from $z'$ to $w'$. The result follows.
\end{proof}
We now prove that $f$ satisfies a uniform head-start condition.
\begin{lemma}
\label{lem:uniformheadstart}
Suppose that $f$ is generalised exponential function. Then there exists $K > 1$ with the following properties. 
\begin{enumerate}[(i)]
\item\label{it1} Suppose that $T, T'$ are tracts. If $z_0, z_1 \in \overline{T}$ and $f(z_0), f(z_1) \in \overline{T'}$, then
\[
\operatorname{Re} z_1 \geq K \operatorname{Re} z_0 \implies \operatorname{Re} f(z_1) \geq K \operatorname{Re} f(z_0).
\]
\item\label{it2} Suppose that $z_0, z_1$ have the same external address. Then there exist $k \in \N$ and $j \in \{0, 1\}$ such that
\[
\operatorname{Re} f^p(z_j) \geq K \operatorname{Re} f^p(z_{1-j}), \qfor p \geq k.
\]
\end{enumerate}
\end{lemma}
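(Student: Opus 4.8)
### Proof strategy for Lemma~\ref{lem:uniformheadstart}

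The plan is to extract the constant $K$ directly from the expansion estimates of Lemma~\ref{lemm:deriv} and Proposition~\ref{prop:expansion}, together with the geometry of the tracts. For part~\eqref{it1}, I would first record that every point $z$ with $f(z)\in\overline{H}$ satisfies a lower bound $\re f(z) \geq g(\re z) - a - O(1)$ and an upper bound $\re f(z) \leq g(\re z)$, since $f(z)=g(\re z)h(\Im z)-a$ and $|h|\in[\hmin,1]$; the imaginary parts on $\overline{T'}$ are bounded by a constant times $\re f(\cdot)$ provided the real parts are large. Using the growth condition \eqref{gconstraint} in the form \eqref{eq:ggrowth}, the map $t \mapsto g(t)$ is super-multiplicative enough that there is $x_1 > M$ and $K>1$ with $g(Kt) - a - C \geq K\,g(t)$ for all $t \geq x_1$, where $C$ absorbs the bounded correction coming from the definition of $H$ and the imaginary-part contribution. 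Combining these, if $\re z_1 \geq K \re z_0$ and both real parts exceed $x_1$, then
\[
\re f(z_1) \geq g(\re z_1) - a - C \geq g(K \re z_0) - a - C \geq K g(\re z_0) \geq K \re f(z_0).
\]
One then has to deal separately with the case $\re z_0 \leq x_1$: here $\re f(z_0)$ is bounded above by $g(x_1)$, so after enlarging $K$ if necessary the inequality $\re f(z_1) \geq K \re f(z_0)$ holds automatically because $\re f(z_1)$ is itself bounded below (by $-a$ say, which is harmless once we note that for points in $\overline{T'}$ with $\re f(z_0)$ small the conclusion is vacuous or trivial). The cleanest route is to choose $K$ large enough that $g(Kx) - a - C \geq Kg(x)$ for \emph{all} $x \geq M$, which is possible because on the compact-ish range $[M, x_1]$ the left side is bounded below by a positive constant while the right side is bounded, and on $[x_1,\infty)$ the super-multiplicativity of $g$ does the work.

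For part~\eqref{it2}, the idea is the standard head-start argument: since $z_0, z_1$ share an external address $\s$, both orbits stay in the tracts $T_{s_n}$, hence in $\C\setminus\mathbb{H}_M$, so Proposition~\ref{prop:expansion} gives $|f^n(z_0) - f^n(z_1)| \geq \mu^n |z_0 - z_1| \to \infty$. The real parts cannot both stay bounded while the points separate — more precisely, since $\Im f^n(z_j)$ is controlled by $\re f^n(z_j)$ on the tracts (both lie in the same $T_{s_n}$, whose imaginary width is fixed, so $|\Im f^n(z_0) - \Im f^n(z_1)| \leq 2\pi$), the growing distance forces $|\re f^n(z_0) - \re f^n(z_1)| \to \infty$. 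Hence for some $j\in\{0,1\}$ we have $\re f^{n_k}(z_j) - \re f^{n_k}(z_{1-j}) \to \infty$ along a subsequence, and in fact, by passing to a single tail, we can fix $j$ and a threshold $k$ so that $\re f^p(z_j) \geq K \re f^p(z_{1-j})$ at time $p=k$ (using that once the difference exceeds $(K-1)$ times the smaller real part, which eventually happens because the smaller real part grows at most like $g^p$ of something while the difference grows at least like $\mu^p$ times... — here one must be a little careful). Then part~\eqref{it1}, applied inductively with $T = T_{s_p}$ and $T' = T_{s_{p+1}}$, propagates the head-start: $\re f^p(z_j) \geq K\re f^p(z_{1-j}) \implies \re f^{p+1}(z_j) \geq K \re f^{p+1}(z_{1-j})$, giving the conclusion for all $p \geq k$.

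The main obstacle is the transition in part~\eqref{it2} from ``the real-part difference tends to infinity'' to ``the ratio eventually exceeds $K$'' — one needs that the \emph{smaller} of the two real parts does not grow faster than the difference. Since both orbits escape through the tracts, $\re f^p(z_{1-j})$ is squeezed: it is at least $M$ (so bounded below) but its growth is tied to $g^p$, whereas by Proposition~\ref{prop:expansion} the difference $|f^p(z_0)-f^p(z_1)|$ grows at least geometrically with ratio $\mu$. These two growth rates are a priori comparable (both could be roughly exponential), so the argument cannot be purely soft; the fix is to use part~\eqref{it1} as a \emph{trap}: once $\re f^p(z_j) \geq K \re f^p(z_{1-j})$ holds for a single $p$, it holds forever, so it suffices to show it holds \emph{at least once}, and if it never held then $\re f^p(z_j) < K \re f^p(z_{1-j})$ and symmetrically (up to swapping) $\re f^p(z_{1-j}) < K \re f^p(z_j)$ for all $p$, which bounds the ratio and hence $|\re f^p(z_0) - \re f^p(z_1)| \leq (K-1)\min_j \re f^p(z_j)$; combined with the comparable upper bound $\re f^p(z_j) \leq g^p(\max_j \re z_j + O(1))$ coming from $\re f(z) \leq g(\re z)$, while the lower bound from Proposition~\ref{prop:expansion} is $\mu^p|z_0-z_1|$ — and one has to check these are genuinely incompatible, which requires relating $\mu$ to the growth of $g$. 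The honest resolution is that the expansion estimate must be compared not with a fixed $\mu^p$ but with the distortion along the orbit; I would set up the head-start at a time $k$ chosen so large that $\mu^k |z_0 - z_1|$ exceeds $(K+1)$ times any upper bound on $\min_j\re f^k(z_j)$ that is forced by $z_0,z_1$ lying in a fixed tract at time $0$, and then the strict inequality at time $k$ follows.
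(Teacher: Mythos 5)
Your argument for part~(i) contains a genuine error in the very first estimate. You claim a lower bound $\operatorname{Re} f(z) \geq g(\operatorname{Re} z) - a - O(1)$, citing $|h| \in [\hmin, 1]$. But $\operatorname{Re} f(z) = g(\operatorname{Re} z)\, h_1(\operatorname{Im} z) - a$, and it is $h_1$ (the real part of $h$), not $|h|$, that governs $\operatorname{Re} f(z)$; since $h_1$ can be arbitrarily close to $0$ near $\operatorname{Im} z \equiv \pm\pi/2 \pmod{2\pi}$, one can have $\operatorname{Re} f(z) \ll g(\operatorname{Re} z)$ even with $\operatorname{Re} z$ large. The bound $|h| \geq \hmin$ only gives $|f(z)| \geq \hmin g(\operatorname{Re} z) - a$, a lower bound on the modulus rather than the real part. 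Your supporting remark — that on $\overline{T'}$ the imaginary part is bounded by a constant times $\operatorname{Re} f(\cdot)$ — is also not uniform in $T'$: the tract $T_{k'}$ sits at height $\approx 2\pi k'$ while its real parts can be as small as $M$, so the ratio $|\operatorname{Im}|/\operatorname{Re}$ on $\overline{T_{k'}}$ is unbounded as $|k'| \to \infty$. Consequently the constant $C$ in your inequality $g(Kt) - a - C \geq K g(t)$ would have to depend on $T'$, and the $K$ you extract is not uniform. The paper avoids this by writing $\operatorname{Re} f(z_1) \geq |f(z_1)| - |\operatorname{Im} f(z_1)|$, then using that \emph{both} $f(z_0)$ and $f(z_1)$ lie in the same tract $\overline{T'}$ of imaginary width $\pi$ to get $|\operatorname{Im} f(z_1)| \leq |\operatorname{Im} f(z_0)| + \pi \leq |f(z_0)| + \pi$, and finally $|f(z_0)| \leq g(\operatorname{Re} z_0) + a$; this chains the loss from the imaginary part of $f(z_1)$ back to $g(\operatorname{Re} z_0)$, which is what gets compared to $\hmin c^q g(\operatorname{Re} z_0)$, and the tract index drops out. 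This comparison between the two points is the key step, and it is missing from your proposal.

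For part~(ii) you follow the same broad route as the paper (Proposition~\ref{prop:expansion} for separation, bounded imaginary difference inside a common tract, then part~(i) as a trap). You are right to flag the passage from ``real-part \emph{difference} tends to infinity'' to ``real-part \emph{ratio} eventually exceeds $K$'' as the delicate point, but the patch you propose does not close it: you suggest choosing $k$ so large that $\mu^k|z_0 - z_1|$ dominates any forced upper bound on $\min_j \operatorname{Re} f^k(z_j)$, yet the only available upper bound on that minimum is of order $g^k(\cdot)$, which by \eqref{eq:ggrowth} grows far faster than $\mu^k$, so the comparison you want goes the wrong way. Your own aside (``these two growth rates are a priori comparable\ldots the argument cannot be purely soft'') already concedes this. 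So while the strategic skeleton of your part~(ii) matches the paper, the quantitative step that would make it rigorous is not supplied.
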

\begin{proof}
Note first that
\begin{equation}
\label{easy1}
|f(z)| - a \leq |Z(z)| \leq g(\operatorname{Re} z), \qfor z \in \C,
\end{equation}
and
\begin{equation}
\label{easy2}
|f(z)| + a \geq |Z(z)| \geq \hmin g(\operatorname{Re} z), \qfor z \in \C.
\end{equation}

First we prove \eqref{it1}. Suppose that $T, T'$ are two tracts, that $z_0, z_1 \in \overline{T}$, and that $f(z_0), f(z_1) \in \overline{T'}$. Suppose that $q \in \N$, that $K \geq 2 \pi q$, and finally that $\operatorname{Re} z_1 \geq K \operatorname{Re} z_2$. Then, by \eqref{gconstraint} and \eqref{easy2},
\begin{align*}
\operatorname{Re} f(z_1) &\geq |f(z_1)| - |\operatorname{Im} f(z_1)|, \\
                        &\geq \hmin g(\operatorname{Re} z_1) - a - |\operatorname{Im} f(z_0)| - \pi, \\
												&\geq \hmin c^q g(\operatorname{Re} z_0) - a - |f(z_0)| - \pi.
\end{align*}								

We then consider two possibilities. Suppose first that $|f(z_0)| \geq 2a$, so that, by \eqref{easy1}, $g(\operatorname{Re} z_0) \geq |f(z_0)| - a \geq \frac{1}{2}|f(z_0)|$. Then
\[
\operatorname{Re} f(z_1) \geq \left(\frac{1}{2}\hmin c^q - 1\right)|f(z_0)| - a - \pi.				
\]

On the other hand, if $|f(z_0)| < 2a$, then 
\[
\operatorname{Re} f(z_1) \geq \hmin c^q g(M) - 3a - \pi \geq \hmin c^q g(M) \frac{|f(z_0)|}{2a} - 3a - \pi.
\]

Since Re $f(z_0) \geq M$, the conclusion \eqref{it1} follows provided that $q$, and hence $K$, is chosen sufficiently large. (Note that the choice of $q$ can be made independently of $z_0$ and $z_1$.)

For \eqref{it2}, suppose that $z_0\ne z_1$ have the same external address. Fix $p \in \N$. Since $z_0$ and $z_1$ have the same external address, there exists a component $U$ of $f^{-p}(\C\setminus\mathbb{H}_M)$, containing both $z_0$ and $z_1$, that maps injectively to $\C\setminus\mathbb{H}_M$. It follows by Proposition~\ref{prop:expansion} that $|f^p(z_0) - f^p(z_1)| \geq \mu^p |z_0 - z_1|$.  The result then follows by \eqref{it1}, since $f^p(z_0)$ and $f^p(z_1)$ lie in the same tract, and $p$ was arbitrary.
\end{proof}
%
%
Next we use the uniform head-start condition to prove the existence of unbounded simple curves in $J$; in other words, we prove that $J$ consists of simple curves that satisfy \eqref{it:dev1} and \eqref{it:dev3}. We defer the proof of \eqref{it:dev2} until a little later.

Next we introduce a so-called \emph{speed ordering}. For each $z,w \in J_{\underline{s}}$ we say that $z \succ w$ if there exists $k \in \N$ with the property that $ \operatorname{Re}f^k(z) > K \operatorname{Re}f^k(w)$, where $K>1$ is the constant from Lemma~\ref{lem:uniformheadstart}. We extend this order to the closure of $J_{\s}$ in $\hat{\C}$, which we denote by $\hat{J_{\underline{s}}}$, by the convention that $\infty \succ z$ for all $z \in J_{\underline{s}}$. We then have the following.
\begin{lemma}
\label{lem: arcs in Js}
Suppose that $f$ is a generalised exponential function, and that $\s$ is an admissible external address. 
Then $(\hat{J_{\underline{s}}}, \succ)$ is a totally ordered space, and $J_{\underline{s}}$ has a unique unbounded component, which is a simple closed arc to infinity.
\end{lemma}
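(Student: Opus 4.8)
The plan is to follow the by-now-standard passage from a uniform head-start condition to a ray structure, as in \cite{RRRS}, but carried out purely topologically. Two things must be shown: that $\succ$ is a total order on $\hat{J_{\underline s}}$, and that $J_{\underline s}$ is, topologically, a simple curve, closed in $\C$ and tending to $\infty$. For the order, Lemma~\ref{lem:uniformheadstart} does the work. Irreflexivity is clear, since $\operatorname{Re} f^n(z) \geq M > 0$ and $K > 1$, and $\infty$ is the maximum by convention. For transitivity, if $z \succ w$ at time $k_1$ and $w \succ v$ at time $k_2$, then part~\eqref{it1} of Lemma~\ref{lem:uniformheadstart}, applied repeatedly, propagates both head-starts to $k = \max\{k_1,k_2\}$, giving $\operatorname{Re} f^k(z) \geq K \operatorname{Re} f^k(w) \geq K^2 \operatorname{Re} f^k(v) > K \operatorname{Re} f^k(v)$, so $z \succ v$. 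For comparability of distinct $z, w \in J_{\underline s}$, part~\eqref{it2} supplies $j \in \{0,1\}$ and $k$ with $\operatorname{Re} f^p(z_j) \geq K \operatorname{Re} f^p(z_{1-j})$ for all $p \geq k$; this inequality is not strict, but it can be upgraded using Proposition~\ref{prop:expansion}, which forces $|f^p(z_0) - f^p(z_1)| \to \infty$: since $f^p(z_0)$ and $f^p(z_1)$ lie in the common tract $T_{s_p}$ their imaginary parts differ by less than $2\pi$, so the real parts separate, and a permanently locked ratio $\operatorname{Re} f^p(z_j) = K \operatorname{Re} f^p(z_{1-j})$ would force $g(\operatorname{Re} f^p(z_j))/g(\operatorname{Re} f^p(z_{1-j})) \to \infty$ by \eqref{eq:ggrowth}, and hence $|f^{p+1}(z_j)+a|/|f^{p+1}(z_{1-j})+a| \to \infty$, which is impossible as that ratio is bounded. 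So the inequality is strict at some time, $z_j \succ z_{1-j}$, and $(\hat{J_{\underline s}}, \succ)$ is totally ordered.

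For the second part I would first record that $J$ is closed in $\C$ and that $\hat{J_{\underline s}}$ is a continuum. The set $F$ is open: using Theorem~\ref{thereisaFatouset}, a point lies in $F$ exactly when some iterate has real part less than $M$, so $F = \bigcup_n f^{-n}(\{z : \operatorname{Re} z < M\})$ is a union of open sets, and $J = \C \setminus F$ is closed. Consequently a point $z$ whose whole orbit satisfies $f^n(z) \in \overline{T_{s_n}} \subseteq \{\operatorname{Re} z \geq M\}$ cannot lie in $F$, so it has a well-defined external address, which — since distinct tracts have disjoint closures (their defining strips are separated) — must be $\underline s$. Hence $J_{\underline s} = \bigcap_n f^{-n}(\overline{T_{s_n}})$ is closed in $\C$, so $\hat{J_{\underline s}} = J_{\underline s} \cup \{\infty\}$, and moreover
\[ \hat{J_{\underline s}} \;=\; \bigcap_{n \in \N} \Big( \overline{f^{-n}_{s_0 s_1 \ldots s_{n-1}}(H)} \cup \{\infty\} \Big). \]
Each set on the right is the closure in $\hat{\C}$ of the connected, unbounded set $f^{-n}_{s_0 \ldots s_{n-1}}(H)$ (by Observation~\ref{obs:basics}\,(1) and the argument in the proof of Proposition~\ref{prop:unbounded}), hence a continuum, and the sequence is nested; the inclusion ``$\subseteq$'' is immediate, and ``$\supseteq$'' holds because $\overline{f^{-n}_{s_0 \ldots s_{n-1}}(H)} \subseteq f^{-n}_{s_0 \ldots s_{n-1}}(\overline H)$ — the latter being closed, as $f$ maps each $\overline{T_k}$ homeomorphically onto $\overline H$ — so that any point of the intersection has $f^n(z) \in \overline{T_{s_n}}$ for all $n$ and therefore lies in $J_{\underline s}$. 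Thus $\hat{J_{\underline s}}$, being a nested intersection of continua, is a continuum.

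Next, the order topology on $\hat{J_{\underline s}}$ coincides with the topology it inherits from $\hat{\C}$. Every order-open set is inherited-open: for $w \in J_{\underline s}$ the ray $\{x : x \succ w\}$ equals $\{\infty\} \cup \bigcup_{k}\{x \in J_{\underline s} : \operatorname{Re} f^k(x) > K \operatorname{Re} f^k(w)\}$, each summand open by continuity of the iterates and $\infty$ an interior point because a point of $J_{\underline s}$ of large modulus has large real part (its imaginary part being confined to the strip defining $T_{s_0}$); the ray $\{x : x \prec w\}$ equals $\bigcup_k \{x \in J_{\underline s} : \operatorname{Re} f^k(w) > K \operatorname{Re} f^k(x)\}$, which is open; and $\{x : x \prec \infty\} = J_{\underline s}$ is the complement of a closed point. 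Hence the identity from $\hat{J_{\underline s}}$ with the inherited topology to $\hat{J_{\underline s}}$ with the order topology is continuous; the former is compact (closed in $\hat{\C}$) and the latter Hausdorff (an order topology of a linear order), so this map is a homeomorphism and the two topologies agree. A metric continuum with more than one point that admits a linear order inducing its topology is an arc (see, e.g., \cite{Nadler}); since $J_{\underline s}$ is non-empty (admissibility) and unbounded (Proposition~\ref{prop:unbounded}), $\hat{J_{\underline s}}$ is nondegenerate, hence homeomorphic to $[0,1]$ with $\infty$ — the maximum — as one endpoint. Removing $\infty$, we get that $J_{\underline s} = \hat{J_{\underline s}} \setminus \{\infty\}$ is homeomorphic to $[0,1)$: a simple curve, closed in $\C$, tending to $\infty$. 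In particular $J_{\underline s}$ is connected, so it is its own unique component, and that component is unbounded, as required.

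The step I expect to be the main obstacle is the upgrading of the head-start inequalities to strict ones in the comparability argument — equivalently, excluding a pair of points in a common $J_{\underline s}$ whose orbits drift apart only proportionally. This is the one place where the growth hypothesis \eqref{gconstraint} on $g$ and the expansion estimate of Proposition~\ref{prop:expansion} must be genuinely combined rather than used in turn; everything else amounts to assembling standard continuum theory around the head-start condition.
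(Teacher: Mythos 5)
Your proof is correct and follows essentially the same route as the paper: build the speed ordering from the uniform head-start condition, show the order topology on $\hat{J_{\s}}$ agrees with the topology inherited from $\hat{\C}$, and invoke the standard characterisation of an arc as a nondegenerate metric continuum carrying a compatible linear order, exactly as in \cite[Prop.\ 4.4(a)]{RRRS}. The paper compresses nearly all of this into a reference to RRRS and a remark that totality of $\succ$ is ``a straightforward consequence'' of Lemma~\ref{lem:uniformheadstart}, whereas you supply the details. Two points are worth highlighting. First, you correctly notice that Lemma~\ref{lem:uniformheadstart} only delivers non-strict inequalities $\geq$ while the speed ordering is defined with strict $>$, and you close this gap by excluding a ``permanently locked ratio'' using Proposition~\ref{prop:expansion} and the growth estimate \eqref{eq:ggrowth}; that argument works (the ratio $|f^{p+1}(z_j)+a|/|f^{p+1}(z_{1-j})+a|$ is indeed bounded once the real parts are in fixed ratio $K$ and the imaginary parts are within $2\pi$, while the same ratio tends to $\infty$ by \eqref{gconstraint}), though one could alternatively observe that the proof of Lemma~\ref{lem:uniformheadstart} actually yields strict inequalities after a trivial strengthening of the choice of $q$. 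Second, you prove directly that $\hat{J_{\s}}$ is a continuum by writing it as a nested intersection of the continua $\overline{f^{-n}_{s_0\ldots s_{n-1}}(H)}\cup\{\infty\}$, so you obtain a single arc at once; the paper instead argues arc-by-component and then uses Proposition~\ref{prop:unbounded} together with $\infty$ being the maximum to collapse to one component. Both variants are valid and of comparable length; yours has the modest advantage of making the connectedness of $J_{\s}$ self-contained rather than routed through Proposition~\ref{prop:unbounded}.
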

\begin{proof}
The fact that $(\hat{J_{\underline{s}}}, \succ)$ is a totally ordered space is a straightforward consequence of Lemma~\ref{lem:uniformheadstart}. 

We then claim that each component of $\hat{J_{\underline{s}}}$ is homeomorphic to a compact interval, which may be degenerate. The proof of this fact is exactly as in the proof of \cite[Proposition 4.4(a)]{RRRS}; it is first shown that the identity map from $\hat{J_{\s}}$ to $(\hat{J_{\s}}, \succ)$ is continuous, and the result then follows from a well-known characterisation of an arc. We omit the details.

Now, since $\s$ is admissible, we know that $J_{\s} \ne \emptyset$. We also know, by Proposition~\ref{prop:unbounded}, that each component of $J_{\s}$ is unbounded. Uniqueness then follows from the fact that $\infty$ is the maximal element of $(\hat{J_{\s}}, \succ)$.
%
%
%
\end{proof}
Note that \eqref{it:dev1} and \eqref{it:dev3} and are now an immediate consequence of Lemma~\ref{lem: arcs in Js}, together with Observation~\ref{obs:basics}.
%
It remains to show that the uniform escape property \eqref{it:dev2} holds on the components of $J$. In fact, this is a consequence of Lemma~\ref{lem: arcs in Js}, together with the following.
\begin{lemma}
Suppose that $f$ is a generalised exponential function. If $z,w \in J$ have the same external address, then 
\[\
\lim\limits_{k \to \infty} \max\{\operatorname{Re}f^k(z), \operatorname{Re}f^k(w)\}= \infty.
\]
\end{lemma}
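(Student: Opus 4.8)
The plan is to argue by contradiction using the expansion estimate of Proposition~\ref{prop:expansion} together with the contraction of $f$ on $\mathbb{H}_m$ established in the proof of Theorem~\ref{thereisaFatouset}. Suppose that $z, w \in J$ share an external address $\s = s_0 s_1 \ldots$, but that the conclusion fails. Then there is a constant $R > M$ and an infinite set $N \subset \No$ with $\operatorname{Re} f^k(z), \operatorname{Re} f^k(w) \le R$ for all $k \in N$. Since $f^k(z), f^k(w) \in T_{s_k}$ for every $k$, the points $f^k(z)$ and $f^k(w)$ lie in a bounded portion of a single tract for each $k \in N$; in particular $|f^k(z) - f^k(w)|$ is bounded above (by roughly $2R + 2\pi$, say) along $N$.

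The key step is then to feed this back through the expansion of Proposition~\ref{prop:expansion}. For each $k \in N$ there is a component $U_k$ of $f^{-k}(\C \setminus \mathbb{H}_M)$ containing both $z$ and $w$ and mapping injectively onto $\C \setminus \mathbb{H}_M$; indeed $z, w \in J$ means every forward iterate has real part exceeding $M$, so such a $U_k$ exists for \emph{every} $k \in \No$, not just $k \in N$. Hence $|f^k(z) - f^k(w)| \ge \mu^k |z - w|$ for all $k$. If $z \ne w$, letting $k \to \infty$ through $N$ forces $|f^k(z) - f^k(w)| \to \infty$, contradicting the uniform bound obtained in the previous paragraph. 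Therefore $z = w$, and the statement is vacuous — but that cannot be, since we may certainly take $z \ne w$ on a nondegenerate hair; so in fact the assumed bad set $N$ cannot exist, and the limit is $+\infty$.

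To make the last deduction clean I would phrase it as: for any $z \ne w$ with a common address, $|f^k(z) - f^k(w)| \ge \mu^k|z-w| \to \infty$; but if $\max\{\operatorname{Re} f^k(z), \operatorname{Re} f^k(w)\}$ did not tend to infinity, then along a subsequence both real parts, and (since consecutive tracts are bounded in the imaginary direction by the $\pi$-width and $f^k(z), f^k(w) \in T_{s_k}$) both imaginary parts would be comparable, making $|f^k(z) - f^k(w)|$ bounded along that subsequence — a contradiction. The case $z = w$ is trivial. The one point needing a little care is controlling the imaginary parts: two points in the same tract $T_{s_k}$ can have imaginary parts differing by up to $2\pi$, which is fine, but one must note that if both have real part at most $R$ then both lie in the square $[M, R] \times [(4s_k-1)\pi/2, (4s_k+1)\pi/2]$, of diameter independent of $k$, so their distance is bounded independently of $k$.

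The main obstacle is essentially bookkeeping: ensuring that the injective pullback domain $U_k$ of Proposition~\ref{prop:expansion} is available for every $k$ — which it is, precisely because $z, w \in J$ guarantees $\operatorname{Re} f^j(z), \operatorname{Re} f^j(w) > M$ for all $j \ge 0$, so the relevant inverse branches $f^{-k}_{s_0 \ldots s_{k-1}}$ are defined along the whole orbit — and then observing that the shared external address pins both orbits into a common tract at each step, so that bounded real parts genuinely entail bounded mutual distance. Once those two points are in place, the contradiction with $\mu^k$-expansion is immediate.
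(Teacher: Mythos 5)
For $z \ne w$ your argument is correct and is the approach the paper intends (it delegates to Lemma~3.2 of \cite{RRRS}, with expansion supplied by Proposition~\ref{prop:expansion}): a common external address places $z$ and $w$ in the same component of $f^{-k}(\C\setminus\mathbb{H}_M)$ for every $k$, so Proposition~\ref{prop:expansion} gives $|f^k(z)-f^k(w)| \ge \mu^k|z-w|$; meanwhile, if both real parts were at most some $R$ along an infinite subsequence, then, since $f^k(z)$ and $f^k(w)$ lie in the same tract $T_{s_k}$, they would both lie in the rectangle $[M,R]\times\bigl[(4s_k-1)\pi/2,(4s_k+1)\pi/2\bigr]$, whose diameter is bounded independently of $k$, giving a contradiction.

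The one genuine error is your dismissal of the case $z = w$ as ``trivial'' (and, earlier, ``vacuous''). For $z = w$ the conclusion reads $\operatorname{Re} f^k(z)\to\infty$, and this is \emph{false} for some $z\in J$: the constant address $000\ldots$ is $g$-bounded, hence admissible, and the contracting inverse branch $f_0^{-1}$ maps $J_{000\ldots}$ into itself, so the Banach fixed point theorem yields a fixed point of $f$ in $J_{000\ldots}$, whose orbit has constant real part. The lemma should therefore be read with the implicit hypothesis $z \ne w$ — which is all that is needed in its application to property~\eqref{it:dev2} — and once that hypothesis is made explicit your argument is complete. One small point to tidy: a tract has vertical width $\pi$, not $2\pi$ as you wrote, though this only improves your bound.
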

\begin{proof}
We omit the proof of this lemma, which is essentially the same as the proof of \cite[Lemma 3.2]{RRRS}, using Proposition~\ref{prop:expansion} to give expansion.
\end{proof}
%
%
%
\section{Cantor bouquets}
\label{sec.bouquet}
In this section we show that $J$ is Cantor bouquet; in other words, we prove part \eqref{Jbouquet} of Theorem~\ref{theo:main}. It was observed in \cite{LasseNada} that the result of \cite{Mayer} holds for \textit{all} Cantor bouquets. Hence part (c) of Theorem~\ref{theo:main} is a direct consequence of this. Note that the arguments in this section are essentially topological, and very similar to those of \cite{brushing}. Accordingly we give only brief details, and refer to that paper for more detailed explanations and definitions.

In fact, we shall construct a so-called one-sided hairy arc. This is a topological object defined as follows (see also \cite{AartsandO} and \cite{brushing}).
\begin{definition}
\label{def:hairyarc}
A \emph{one-sided hairy arc} is a continuum $X$ containing an arc $B$ (called the base of $X$), and a total order $\prec$ on $B$, such that:
\begin{enumerate}
\item The closure of every connected component of $X \setminus B$ is an arc, with exactly one endpoint in $B$. In particular, for each $x \in X \setminus B$, there exists a unique arc $\gamma_x :[0,1] \to X$ such that $\gamma_x(0)=x$, $\gamma_x(t)\notin B$ for $t <1$, and $\gamma_x(1) \in B$. In this case, we say that $x$ belongs to the hair attached at $\gamma_x(1)$.\label{osha-prop1}
 \item All the hairs are attached at the same side of the base.\label{osha-prop2}
 \item Distinct components of $X \setminus B$ have disjoint closures, and $X \setminus B$ is dense in $X$. \label{osha-prop3} 
 \item If $x_0 \in X \setminus B$ and $x_n \in X \setminus B$ is a sequence of points converging to $x_0$, then $\gamma_{x_n} \rightarrow \gamma_{x_0}$ in the Hausdorff metric.\label{osha-prop4}
\item If $b \in B$ and $x$ belongs to the hair attached at $b$, then there exist sequences $x^+_n, x^-_n$, attached respectively at points $b^+
_n, b^-_n \in B$, such that $b^-_n \prec b \prec b^+_n$ and $x^-_n, x^+_n \rightarrow x$ as $n\rightarrow\infty$.\label{osha-prop5}
\end{enumerate}
\end{definition}
It is known that if $X$ is a one-sided hairy arc, then $X \setminus B$ is homeomorphic to a topological object known as a \emph{straight brush}; we omit the definition, which can be found at \cite{AartsandO}. Our goal is to construct a suitable base $B$ so that $J \cup B$ is a one-sided hairy arc. Since a Cantor bouquet is, by definition, a set ambiently homeomorphic to a straight brush, the result follows.

We follow the construction in \cite[Section 5]{brushing}, although our construction is slightly easier since (up to $2 \pi i$ translation we only have one tract. We define $B$ to be the union of;
\begin{itemize}
\item the set $\Z^{\N_0}$ of all external addresses;
\item the set of all so-called ``intermediate external addresses'' obtained by adding an intermediate entry between any pair of integers;
\item the set $\{-\infty, \infty\}$.
\end{itemize}
We then let $\tilde{H}= \overline{H}\cup B$; recall that $H$ is the image of the tracts, defined in \eqref{eq:Hdef}. Exactly as in \cite[Section 5]{brushing} we can define a topology on $\tilde{H}$ by specifying a neighbourhood base for every $\underline{s} \in B$. It then follows from \cite[Proposition 5.6]{brushing},  that $\tilde{H}$ is homeomorphic to the closed unit disc, and $B$ is homeomorphic to an arc. 

First, we show that the set of admissible external addresses (see Definition \ref{def:admissible}) is dense in the set of all external addresses.
\begin{proposition}
\label{prop:dense addresses}
The set of admissible external addresses is dense in $\Z_{\N_0}$.
\end{proposition}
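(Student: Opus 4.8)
The plan is to show that the admissible external addresses form a dense subset of $\Z^{\No}$ by exhibiting, in an arbitrarily small neighbourhood of any prescribed address, an address that is $g$-bounded — and hence admissible by Theorem~\ref{thm:gbounded-admissible}. Recall that a neighbourhood of $\s = s_0 s_1 \ldots$ in $\Z^{\No}$ is determined by prescribing finitely many initial entries $s_0, s_1, \ldots, s_{N-1}$ and letting the remaining entries be arbitrary. So it suffices, given $\s \in \Z^{\No}$ and $N \in \N$, to construct an admissible address $\s'$ that agrees with $\s$ on the first $N$ entries.

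The key idea is that $g$-boundedness is a condition only on the growth of $|s_n|$ relative to the iterates $g^n(x_0)$, so we have complete freedom to choose the tail of $\s'$. First I would take $\s'$ to agree with $\s$ on entries $0, 1, \ldots, N-1$ and set all subsequent entries equal to $0$ (or any bounded sequence). Then I need $x_0 \geq 0$ with $2\pi|s_n'| \leq g^n(x_0)$ for all $n \in \No$. For $n \geq N$ this is immediate since $s_n' = 0$. For $n < N$ there are only finitely many inequalities $2\pi|s_n| \leq g^n(x_0)$ to satisfy; since $g$ is strictly increasing with $g(x) \to \infty$ (this follows from \eqref{gconstraint}, or more precisely from \eqref{eq:ggrowth}, which gives $g^n(x_0) \to \infty$ as $x_0 \to \infty$ for each fixed $n$, using that each $g^n$ is increasing and unbounded), we can choose $x_0$ large enough that all $N$ of these inequalities hold simultaneously. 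Hence $\s'$ is $g$-bounded, therefore admissible by Theorem~\ref{thm:gbounded-admissible}, and lies in the prescribed neighbourhood of $\s$. This proves density.

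I do not expect a serious obstacle here; the only point requiring a little care is confirming that $g^n(x_0) \to \infty$ as $x_0 \to \infty$ for each fixed $n$, which is clear because $g$ maps $(0,\infty)$ into $(0,\infty)$, is strictly increasing by \eqref{fun.A}, and satisfies $g(x+2\pi) \geq c g(x)$ for large $x$ with $c > 1$, so $g$ itself is unbounded, and composing an unbounded increasing map with itself $n$ times again yields an unbounded increasing map. One should also note the edge case of the constant $2\pi$ (or indeed that the remark after \eqref{gbound} allows any positive constant), but nothing substantive turns on it.
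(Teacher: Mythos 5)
Your proof is correct and takes essentially the same route as the paper: both reduce to Theorem~\ref{thm:gbounded-admissible} and then exhibit a dense family of $g$-bounded addresses. The paper uses periodic addresses (dense, and $g$-bounded since they are bounded), whereas you use addresses with a zero tail; this is an inessential difference.
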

\begin{proof}
We know from Theorem \ref{thm:gbounded-admissible} that $g$-bounded external addresses are admissible. Hence, since periodic external addresses are certainly $g$-bounded, we deduce that periodic external addresses are admissible. The result follows since periodic external addresses are dense in $\Z_{\N_0}$.
\end{proof}
Let $\tilde{J}$ denote the closure of $J$ in the space $\tilde{H}$. Our goal is to show that $\tilde{J}$ is a one-sided hairy arc. To achieve this we need some results which together imply that properties \eqref{osha-prop1}-\eqref{osha-prop5} from Definition \ref{def:hairyarc} hold. 
\begin{proposition}
\label{prop:hairy arc}
The set $\tilde{J}$ is a continuum with $J = \tilde{J} \setminus B$. Moreover, the closure of every component of $J$ is an arc, with exactly one endpoint in $B$, distinct components of $J$ have disjoint closures in $\tilde{J}$, and $J$ is dense in $\tilde{J}$.
\end{proposition}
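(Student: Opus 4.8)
Proposition~\ref{prop:hairy arc} asserts four things: that $\tilde J$ is a continuum with $J=\tilde J\setminus B$; that the closure of every component of $J$ is an arc with exactly one endpoint in $B$; that distinct components have disjoint closures in $\tilde J$; and that $J$ is dense in $\tilde J$. The plan is to transport all of this to the symbolic/combinatorial model $\tilde H = \overline H \cup B$, where we already know from \cite[Proposition 5.6]{brushing} that $\tilde H$ is a closed disc and $B$ an arc, and then use the homeomorphism $f:\overline{T_k}\to\overline H$ together with the head-start machinery of Section~\ref{sec.hairs}.

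\textbf{Continuum and identification of $B$.} Since $\tilde J$ is by definition the closure of $J$ in the compact connected space $\tilde H$, and $J\cup\{\infty\}$ is a continuum by Proposition~\ref{prop:unbounded}, $\tilde J$ is automatically a continuum. To see $J=\tilde J\setminus B$, I would argue that no point of $J$ lies in $B$ (points of $B$ are ``external addresses at infinity'', carrying no finite preimage) and that every point of $\tilde J\cap B$ is a limit of points of $J$: given $\underline s\in B$, approximate it by admissible addresses using Proposition~\ref{prop:dense addresses} (for genuine external addresses) and by sequences running off along a hair (for intermediate addresses and $\pm\infty$), and pick points of $J$ on the corresponding hairs tending to the model point. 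This is where the neighbourhood-base description of the topology on $\tilde H$ from \cite[Section 5]{brushing} does the work.

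\textbf{Components are arcs with one endpoint in $B$.} Fix an admissible $\underline s$. By Lemma~\ref{lem: arcs in Js}, $J_{\underline s}$ has a unique unbounded component $C_{\underline s}$, a simple arc to infinity, and $(\widehat{J_{\underline s}},\succ)$ is totally ordered with $\infty$ maximal. I would show that the closure of $C_{\underline s}$ taken in $\tilde J$ adjoins exactly the single point of $B$ corresponding to $\underline s$ (the ``endpoint at infinity'' of that hair), giving a half-open arc compactified to a closed arc; this uses that $\operatorname{Re} f^n\to\infty$ uniformly along the tail of the hair (the last lemma of Section~\ref{sec.hairs}), so the tail converges in $\tilde H$ precisely to $\underline s$, and to nothing else. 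The remaining (bounded) components of $J_{\underline s}$ are degenerate, hence single points, and each of those is also the endpoint of a hair in the closure picture — here I would lean on the speed ordering and the argument of \cite[Proposition 4.4]{RRRS} exactly as quoted in Lemma~\ref{lem: arcs in Js}. Disjointness of closures of distinct components in $\tilde J$ follows from Observation~\ref{obs:basics}(2) (distinct external addresses give disjoint $J_{\underline s}$) together with the fact that distinct addresses have distinct limit points in $B$, so two components that limit onto $B$ cannot share a boundary point there either. Density of $J$ in $\tilde J$ is immediate from $\tilde J=\overline J$ and $J=\tilde J\setminus B$ once we know $B$ has empty interior in $\tilde J$, which again comes from the model: $B$ is an arc in the disc $\tilde H$, nowhere dense.

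\textbf{Main obstacle.} The routine parts are the continuum statement and density; the delicate point is controlling \emph{closures in the model topology} — showing that a hair's tail converges in $\tilde H$ to its own address and to no other point of $B$, and that bounded components (endpoints) sitting ``deep'' in a fibre $J_{\underline s}$ still attach to the single base point $\underline s$ and to no neighbouring intermediate address. This requires marrying the uniform head-start/expansion estimates (Proposition~\ref{prop:expansion}, Lemma~\ref{lem:uniformheadstart}) with the explicit neighbourhood bases from \cite[Section 5]{brushing}, and is essentially the content of the corresponding step in \cite{brushing}; since our setting has (up to translation) a single tract, the bookkeeping is lighter, and I would present it by citing \cite{brushing} for the topological scaffolding and supplying only the estimates specific to the generalised exponential.
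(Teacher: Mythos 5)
Your overall route — working in the model $\tilde H$, using Proposition~\ref{prop:dense addresses} to get $B\subset\tilde J$, using Lemma~\ref{lem: arcs in Js} and the neighbourhood bases from \cite[Section 5]{brushing} to see that each hair's closure adjoins exactly its own base point, and deducing disjointness from Observation~\ref{obs:basics}(2) — is the same as the paper's. But your opening step for the continuum claim is not valid as written. You say that since $J\cup\{\infty\}$ is a continuum in $\Chat$ and $\tilde H$ is compact and connected, ``$\tilde J$ is automatically a continuum.'' Compactness is indeed automatic, but connectedness is not: the point $\infty$ in $\Chat$ has been blown up to the whole arc $B$ in $\tilde H$, and the spherical and model topologies do not agree near infinity, so connectedness of $J\cup\{\infty\}$ in $\Chat$ does not transfer. (Think of $J=\{0,1\}\times(0,1)$ inside the square $[0,1]^2$: the one-point compactification of $J$ may be connected, yet the closure in the square is not.) What is actually needed — and what the paper does — is to write $\tilde J$ as the union of the sets $J_{\underline s}\cup\{\underline s\}$, each of which is connected and meets the connected arc $B$, together with $B$ itself; connectedness then follows from the standard fact about unions of connected sets meeting a common connected set. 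You have all the ingredients for this (you prove each hair closes up onto a single base point, and you prove $B\subset\tilde J$), so the fix is only to replace the ``automatic'' claim by the decomposition argument, but as stated the step is a genuine non sequitur.

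A secondary, smaller issue: you discuss ``the remaining (bounded) components of $J_{\underline s}$,'' but these do not exist. Proposition~\ref{prop:unbounded} says every component of $J$ is unbounded, and since the address map is locally constant on $J$ (each $f^n(J)$ lies in the disjoint union of open tracts), every component of $J$ is a component of some $J_{\underline s}$; combined with the uniqueness in Lemma~\ref{lem: arcs in Js}, each nonempty $J_{\underline s}$ is a single unbounded arc. Likewise the density of $J$ in $\tilde J$ is immediate from $\tilde J$ being defined as the closure of $J$ — no appeal to $B$ having empty interior is needed.
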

We know that $B$ is an arc. Note that this proposition gives properties \eqref{osha-prop1} and \eqref{osha-prop3}. Moreover, $\tilde{J}$ is one-sided by construction, hence property \eqref{osha-prop2} is satisfied.  
\begin{proof}[Proof of Proposition~\ref{prop:hairy arc}]
Recall from Lemma~\ref{lem: arcs in Js} that each component of $J$ is a simple closed arc to infinity, $J_{\s}$, for some external address $\s$. Suppose that $\s$ is an admissible external address. We can deduce from the topology on $\tilde{H}$ that points of $J_{\s}$ cannot accumulate on any element of $B$ apart from $\s$. Hence $J_{\s} \cup \{\s\}$ is a compact subset of $\tilde{H}$. Moreover, $J_{\s} \cup \{\s\}$ is connected.

It follows from Proposition \ref{prop:dense addresses} that $B \subset \tilde{J}$. Hence $\tilde{J}$ is the disjoint union 
\begin{equation}
\label{eq:J}
\tilde{J} = \bigcup_{\text{admissible } \s} J_{\s} \cup B,
\end{equation}
where the union is taken over the admissible external addresses. 

$B$ is homeomorphic to an arc, and so connected. Also, $\tilde{H}$ is a compact metric space, and hence so is $\tilde{J}$. The claims of the proposition follow from these facts, together with \eqref{eq:J}.
\end{proof}

In order to prove the accumulation of hairs, i.e., property \eqref{osha-prop5}, we use the following result.
\begin{proposition}
\label{prop:property 6}
Suppose that $z_0 \in J$. Then there are sequences $z^-_n , z^+_n \in J$, with $\addr(z^-_n) < \addr(z_0) < \addr(z^+_n)$, for $n \in\N$, and $z^-_n \rightarrow z_0$, $z^+_n \rightarrow z_0$ as $n \rightarrow\infty$.
\end{proposition}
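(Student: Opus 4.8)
The plan is to construct each $z_n^{\pm}$ by perturbing the external address of $z_0$ in a single, far-out coordinate, and to bound the resulting displacement using Proposition~\ref{prop:expansion} together with the $2\pi i$-periodicity of $f$. Write $\s = \addr(z_0) = s_0 s_1 \ldots$, and for $n \in \No$ set $\zeta_n \defeq f^n(z_0)$, so that $\zeta_n \in T_{s_n}$ and, by Observation~\ref{obs:basics}, $\zeta_n \in J_{\sigma^n(\s)}$. The key observation is that $f$ is $2\pi i$-periodic, because $h$ is $2\pi$-periodic; since moreover $T_{k\pm 1} = T_k \pm 2\pi i$ for every $k \in \Z$, the inverse branches satisfy $f_{k\pm 1}^{-1} = f_k^{-1}(\,\cdot\,) \pm 2\pi i$ on $H$, and hence $J_{(k \pm 1)\tau} = J_{k\tau} \pm 2\pi i$ for every external address $\tau$. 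In particular $\zeta_n \pm 2\pi i \in T_{s_n \pm 1} \subset H$ and $\zeta_n \pm 2\pi i \in J_{\tau^{\pm}_n}$, where $\tau^{\pm}_n \defeq (s_n \pm 1)\, s_{n+1}\, s_{n+2} \ldots$.

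I would then define $z_n^{\pm} \defeq f_{s_0 s_1 \ldots s_{n-1}}^{-n}(\zeta_n \pm 2\pi i)$; this is well defined since $\zeta_n \pm 2\pi i \in H$. Applying $f_{s_0 \ldots s_{n-1}}^{-n}$ entry by entry (once more using Observation~\ref{obs:basics}) shows that $z_n^{\pm} \in J$ with
\[
\addr(z_n^{\pm}) = s_0\, s_1 \ldots s_{n-1}\,(s_n \pm 1)\, s_{n+1}\, s_{n+2} \ldots ,
\]
which first differs from $\s = \addr(z_0)$ in position $n$, where the entry is larger (for the sign $+$) or smaller (for the sign $-$); hence $\addr(z_n^-) < \addr(z_0) < \addr(z_n^+)$ in the lexicographic order on $\Z^{\No}$. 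Note also that $z_0 = f_{s_0 \ldots s_{n-1}}^{-n}(\zeta_n)$.

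It remains to check that $z_n^{\pm} \to z_0$. Since $z_0$ and $z_n^{\pm}$ have the same first $n$ external-address entries, we have $f^j(z_0), f^j(z_n^{\pm}) \in T_{s_j} \subset \C \setminus \mathbb{H}_M$ for $0 \le j < n$, so both points lie in a common component $U$ of $f^{-n}(\C \setminus \mathbb{H}_M)$, just as in the proof of Lemma~\ref{lem:uniformheadstart}\eqref{it2}. As $f^n(z_0) = \zeta_n$ and $f^n(z_n^{\pm}) = \zeta_n \pm 2\pi i$, Proposition~\ref{prop:expansion} gives
\[
|z_n^{\pm} - z_0| \le \mu^{-n} \bigl| (\zeta_n \pm 2\pi i) - \zeta_n \bigr| = 2\pi\, \mu^{-n} \to 0 \quad \text{as } n \to \infty,
\]
because $\mu > 1$. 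The main point requiring care is the verification that $z_0$ and $z_n^{\pm}$ lie in the \emph{same} component of $f^{-n}(\C \setminus \mathbb{H}_M)$, so that Proposition~\ref{prop:expansion} applies with contraction factor $\mu^{-n}$; everything else is routine bookkeeping about how the maps $f_k^{-1}$ act on tracts and external addresses, together with the periodicity identity $J_{(k\pm 1)\tau} = J_{k\tau} \pm 2\pi i$.
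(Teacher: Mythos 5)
Your proof is correct and follows essentially the same approach as the paper: pull back $f^n(z_0)\pm 2\pi i$ under the inverse branch along the first $n$ entries of $\addr(z_0)$, note the resulting address differs from $\s$ only in entry $n$, and apply Proposition~\ref{prop:expansion} to get $|z_n^\pm - z_0|\leq 2\pi\mu^{-n}\to 0$. Your write-up is somewhat more explicit than the paper's (spelling out the $2\pi i$-periodicity of $f$ and the exact form of $\addr(z_n^\pm)$), but there is no substantive difference in method.
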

\begin{proof}
Choose $p \in \N$. Let $U$ be the component of $f^{-p}(H)$ containing $z_0$, and let $\phi : H \to U$ be the inverse to $f^{-p}$. Define a pair of points $z^\pm_p = \phi(f^p(z_0) \pm 2 \pi i)$, so that, by definition, we have $\addr(z^-_p) < \addr(z_0) < \addr(z^+_p)$, for $p \in \N$. It follows by Proposition~\ref{prop:expansion} that $z^\pm_n \rightarrow z_0$ as $n \rightarrow \infty$, as required.
\end{proof}
The following proposition is analogous to \cite[Proposition 6.1]{brushing} and we omit the proof.
\begin{proposition}
\label{prop:6.1 brushing}
Suppose that $a_n \in J$ converges to a point $a \in J$, and that, for each $n$, $b_n \in J_{addr(a_n)}$ has the same external address as $a_n$ and satisfies $b_n \succ a_n$ in the speed ordering of $f$. If $b \in J$ is an accumulation point of the sequence $b_n$, then $b \succ a$.
\end{proposition}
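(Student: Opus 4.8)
The plan is to reduce the statement to the total order on $\hat{J_{\s}}$ provided by Lemma~\ref{lem: arcs in Js} and then to transport the speed ordering across the limit. First I would note that $\addr$ is continuous on $J$: if $z_n \to z$ in $J$, then for each $m$ we have $f^m(z_n) \to f^m(z) \in T_{s_m}$, and since $T_{s_m}$ is open this forces $\addr(z_n)$ to agree with $\addr(z)$ in the $m$th coordinate for all large $n$. Passing to a subsequence along which $b_n \to b$ (and hence still $a_n \to a$), this gives $\addr(b) = \lim_n \addr(b_n) = \lim_n \addr(a_n) = \addr(a) =: \s$, so that $a, b \in \hat{J_{\s}}$. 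Since $(\hat{J_{\s}}, \succ)$ is totally ordered, it is enough to exclude the two possibilities $a \succ b$ and $a = b$.

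To exclude $a \succ b$: such a relation would give some $k \in \N$ with $\operatorname{Re} f^k(a) - K \operatorname{Re} f^k(b) =: 2\varepsilon > 0$, where $K$ is the constant of Lemma~\ref{lem:uniformheadstart}. As $f^k$ is continuous and $a_n \to a$, $b_n \to b$, we would have $\operatorname{Re} f^k(a_n) - K \operatorname{Re} f^k(b_n) > \varepsilon > 0$ for all large $n$, that is $a_n \succ b_n$. But $b_n \succ a_n$ by hypothesis, and $a_n, b_n \in \hat{J_{\addr(a_n)}}$, on which $\succ$ is a total order; this is a contradiction. The feature that makes the argument go through is that it is the convergence of the \emph{whole} sequences $a_n \to a$ and $b_n \to b$, not merely their boundedness, that lets a single witnessing exponent $k$ be pushed from the limit down to the tail.

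The remaining case, $a = b$, is where I expect the main difficulty. Here I would let $k_n$ be the least exponent witnessing $b_n \succ a_n$ and first check that $k_n \to \infty$: a bounded subsequence, say $k_n \equiv k$, would on passing to the limit in $\operatorname{Re} f^k(b_n) > K \operatorname{Re} f^k(a_n)$ yield $\operatorname{Re} f^k(a) \geq K \operatorname{Re} f^k(a)$, which is impossible because $\operatorname{Re} f^k(a) > M > 0$ and $K > 1$. One then wants a contradiction from the tension between the fact that, since $a_n, b_n \to a$ with $\addr(a_n) = \addr(b_n) \to \addr(a)$, the orbits of $a_n$ and $b_n$ shadow that of $a$ over a number of steps growing with $n$, while the factor-$K$ head start of $b_n$ over $a_n$ is acquired only after $k_n \to \infty$ steps. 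Making this quantitative via the uniform expansion of Proposition~\ref{prop:expansion} — exactly as in the escape lemma at the end of Section~\ref{sec.hairs} — is the crux; I would also record that in the situations where this proposition is applied the head start of $b_n$ over $a_n$ is uniform in $n$, in which case $a = b$ is already excluded by the continuity argument of the previous paragraph carried out with a uniform gap.
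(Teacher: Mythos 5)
The paper omits the proof of this proposition, deferring to \cite[Proposition~6.1]{brushing}, so there is no in-paper argument to compare against directly. Your first two paragraphs are correct and are the expected argument: openness of the tracts gives continuity of the address map on $J$, so after passing to a subsequence with $b_n \to b$ you get $\addr(b) = \addr(a) =: \underline{s}$, placing $a$ and $b$ in the same totally ordered set $(\hat{J_{\underline{s}}}, \succ)$; and continuity of $f^k$ for a witnessing exponent $k$ rules out $a \succ b$, since that would force $a_n \succ b_n$ for large $n$, contradicting $b_n \succ a_n$. Together these two steps prove $a \nsucc b$, that is, $b \succeq a$.

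Your third part, attempting to rule out $a = b$, cannot be completed, because $a = b$ is genuinely possible: take $a_n = a$ for every $n$ and let $b_n \in J_{\addr(a)}$ be points strictly above $a$ in the speed ordering converging to $a$ along the hair; then $b_n \succ a_n$ for all $n$, yet the only accumulation point of $(b_n)$ is $a$ itself. Your observation that the witnessing exponents $k_n$ must tend to infinity is correct, but no contradiction follows --- $|b_n - a|$ can shrink fast enough to offset the expansion factor $\mu^{k_n}$. The conclusion of the proposition should therefore be read as the non-strict $b \succeq a$; this is exactly what Proposition~\ref{prop: property 5} needs, since there one wants each $b \in \gamma$ to lie on $\gamma_{z_0}$, i.e.\ $b \succeq z_0$, and the case $b = z_0$ is harmless. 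Your closing remark that in the applications the head start of $b_n$ over $a_n$ is uniform in $n$ is also not accurate: in Proposition~\ref{prop: property 5} the points $b_n \in \gamma_{x_n}$ can be taken arbitrarily close to $x_n$, so no uniform gap is available.
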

We use Proposition \ref{prop:6.1 brushing} as a tool to prove property \eqref{osha-prop4}, as shown below. 
\begin{proposition}
\label{prop: property 5}
 If $x_0 \in X \setminus B$ and $x_n \in X \setminus B$ is a sequence of points converging to $x_0$, then $\gamma_{x_n} \rightarrow \gamma_{x_0}$ in the Hausdorff metric.
 \end{proposition}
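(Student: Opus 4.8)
The plan is to follow the scheme of \cite[Proposition 6.1]{brushing} closely, exploiting the fact that we have a total (speed) ordering on each $\hat{J_{\underline s}}$ and the expansion estimate from Proposition~\ref{prop:expansion}. Fix $x_0 \in X \setminus B = J$ and a sequence $x_n \to x_0$ in $\tilde J$. Passing to a subsequence (which suffices, since a sequence converges in the Hausdorff metric iff every subsequence has a further subsequence converging to the same limit), we may assume that the hairs $\gamma_{x_n}$ converge in the Hausdorff metric to some compact connected set $\Gamma$; compactness of the space of closed subsets of the compact metric space $\tilde H$ guarantees such a subsequence exists. The goal is then to show $\Gamma = \gamma_{x_0}$, i.e. $\Gamma = \hat{J_{\underline s}}$ where $\underline s = \addr(x_0)$, equivalently $\Gamma = J_{\underline s} \cup \{\underline s\}$.

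First I would pin down the external addresses. Since $x_n \to x_0$ and the base point $\gamma_{x_n}(1) \in B$ is the address $\addr(x_n)$, and addresses vary continuously in $\tilde H$, we get $\addr(x_n) \to \addr(x_0) = \underline s$; moreover for all large $n$ the point $x_n$ lies in $J$ (not $B$), so $\addr(x_n) = \underline s$ eventually — here one uses that $x_0 \in J$ has a genuine integer address and the neighbourhood-base description of the topology on $\tilde H$ forces nearby points of $J$ to share finitely many, then all, initial entries; alternatively, if $\addr(x_n) \ne \underline s$ one can still run the argument and conclude $\Gamma$ is a limit of nearby hairs. Grant then that eventually $\addr(x_n) = \underline s$, so each $\gamma_{x_n}$ is contained in $\hat{J_{\underline s}}$, hence $\Gamma \subset \hat{J_{\underline s}}$ by closedness of $\hat{J_{\underline s}}$. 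For the reverse inclusion, I would use the speed ordering: write $x_0 = x_0$, and note that the hair through $x_0$ is the ``downward'' segment $\{\, w \in \hat{J_{\underline s}} : w \preceq x_0 \,\}$ together with the ``upward'' segment $\{\, w : w \succeq x_0 \,\}$, and $\gamma_{x_0}$ is by definition the arc component of $x_0$, which since $\hat{J_{\underline s}}$ is an arc is all of $\hat{J_{\underline s}}$ when $x_0$ lies on the unique unbounded component. One shows $\Gamma \supset \hat{J_{\underline s}}$ by applying Proposition~\ref{prop:6.1 brushing} in both the ``up'' and ``down'' directions: given any $w \in J_{\underline s}$ with $w \succ x_0$, the points $x_n \to x_0$ and a choice $b_n \in J_{\underline s}$ with $b_n \succ x_n$ converging along $\gamma_{x_n}$ to an accumulation point must, by that proposition, land above $x_0$, and a continuity/compactness argument (the hairs $\gamma_{x_n}$ are arcs joining their endpoints to $\infty$, and they converge Hausdorff) forces $w$ to be a limit of points on $\gamma_{x_n}$; symmetrically for $w \prec x_0$ using the reflected version of Proposition~\ref{prop:6.1 brushing}. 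Finally $\underline s \in \Gamma$ since each $\gamma_{x_n}$ contains points of arbitrarily large real part (they are arcs to $\infty$), and in $\tilde H$ the only accumulation point of such a set compatible with address $\underline s$ is $\underline s$ itself.

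The main obstacle I expect is the reverse inclusion $\hat{J_{\underline s}} \subset \Gamma$: a priori the hairs $\gamma_{x_n}$, although converging Hausdorff to \emph{some} continuum, could converge to a proper sub-arc of $\hat{J_{\underline s}}$, or could ``collapse'' away from the far (high real part) end. Ruling this out is exactly where Proposition~\ref{prop:6.1 brushing} does the work — it prevents the limit from sitting strictly below any prescribed point of the target hair — combined with the expansion estimate of Proposition~\ref{prop:expansion}, which guarantees that the inverse branch $\phi$ used to produce comparison points $b_n$ contracts, so that $b_n \to b$ really does accumulate on the prescribed point rather than escaping. Once both inclusions are in hand, $\Gamma = \hat{J_{\underline s}} = \gamma_{x_0}$, and since the argument applied to an arbitrary subsequence, the full sequence $\gamma_{x_n}$ converges to $\gamma_{x_0}$ in the Hausdorff metric, which is property~\eqref{osha-prop4}. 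I would refer the reader to \cite[Proposition 6.1]{brushing} for the details of the two applications of Proposition~\ref{prop:6.1 brushing}, since the combinatorial bookkeeping is identical to the case treated there.
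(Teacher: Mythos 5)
There is a genuine conceptual error at the heart of your proposal: you identify $\gamma_{x_0}$ with $\hat{J_{\underline{s}}}$. By Definition~\ref{def:hairyarc}\eqref{osha-prop1}, $\gamma_{x_0}$ is the arc from $x_0$ to the attaching point $\underline{s}\in B$; since $B$ is attached at the ``infinity'' end, $\gamma_{x_0}$ is only the upper piece $\{w\in\hat{J_{\underline{s}}}:w\succeq x_0\}$, and this equals $\hat{J_{\underline{s}}}$ only when $x_0$ happens to be the finite endpoint of the hair. For a general $x_0$ the statement $\Gamma=\hat{J_{\underline{s}}}$ that you set out to prove is simply false, because the correct limit is the proper sub-arc $\gamma_{x_0}$.

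This misidentification then inverts the logic of the proof. You treat $\hat{J_{\underline{s}}}\subset\Gamma$ as the hard direction and try to extract it from Proposition~\ref{prop:6.1 brushing}, but that inclusion is in general false, and in any case Proposition~\ref{prop:6.1 brushing} only \emph{bounds accumulation points from below in the speed ordering}; it does not manufacture accumulation points, so it cannot force a prescribed $w\prec x_0$ into the Hausdorff limit. The paper's proof runs the other way. The easy inclusion is $\gamma_{x_0}\subset\gamma$: the Hausdorff limit $\gamma$ is a connected subset of the arc $J_{\underline{s}}\cup\{\underline{s}\}$ containing both $x_0$ and $\underline{s}$, so it contains the whole sub-arc between them. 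The hard inclusion $\gamma\subset\gamma_{x_0}$ is where Proposition~\ref{prop:6.1 brushing} is used: any $b\in\gamma$ is a limit of points $b_n\in\gamma_{x_n}$ with $b_n\succeq x_n$, so the proposition gives $b\succeq x_0$, i.e.\ $b\in\gamma_{x_0}$. A further smaller gap: your assertion that $\addr(x_n)=\underline{s}$ eventually is not correct (Proposition~\ref{prop:property 6} explicitly produces $z_n^\pm\to z_0$ with strictly different addresses); you flag this but then ``grant'' it anyway, which is not safe. So while the overall strategy (subsequence, Hausdorff compactness, invoke Proposition~\ref{prop:6.1 brushing}) is right, the argument needs to be reorganised around the correct target $\gamma_{x_0}$ and the correct direction of the ordering argument.
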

\begin{proof}
Passing to a subsequence, we may assume that $\gamma_{x_n}$ converges in the Hausdorff metric to a limit $\gamma$. Then $\gamma \subset J_{\underline{s}}\cup \{s\},$ where $\underline{s}= \addr(z_0)$. Note that $\gamma$ is connected as the Hausdorff limit of compact connected subsets of the compact space $\tilde{J}$, and also it contains both $z_0$ and $\underline{s}$. Hence we have that $\gamma_{z_0} \subset \gamma.$
It remains to show that $\gamma \subset \gamma_{z_0}$. Note that this inclusion follows from Proposition \ref{prop:6.1 brushing}. 
\end{proof}
We have shown that $\tilde{J} = J \cup B$ is a one-sided hairy arc. Hence, for the reasons noted earlier, $J$ is a Cantor bouquet, which completes the proof of Theorem~\ref{theo:main}.
\bibliographystyle{alpha}
\bibliography{../Exp.References}
\end{document}